\documentclass[12 pt]{article}%
\usepackage{amsmath, amsfonts, amsthm, color,latexsym}
\usepackage{amsmath}
\usepackage{amsfonts}
\usepackage{amssymb}
\usepackage{color, soul}
\usepackage{graphicx}%
\usepackage{enumerate}
\usepackage[utf8]{inputenc}
\usepackage[T1]{fontenc}
\setcounter{MaxMatrixCols}{30}
\allowdisplaybreaks[4]
\providecommand{\U}[1]{\protect\rule{.1in}{.1in}}
\newtheorem{theorem}{Theorem}[section]
\newtheorem{proposition}[theorem]{Proposition}

\newtheorem{example}[theorem]{Example}

\newtheorem{remark}[theorem]{Remark}

\newtheorem{lemma}[theorem]{Lemma}
\newtheorem{final remark}[theorem]{Final Remark}
\newtheorem{definition}[theorem]{Definition}
\textwidth=16.1cm
\textheight=23cm
\hoffset=-15mm
\voffset=-20mm

\begin{document}

\title{\sc Multiple almost summing operators} 
\author{Joilson Ribeiro\thanks{joilsonor@ufba.br}~ and Fabr\'icio Santos\thanks{fabriciosantos@ufba.br\thinspace \hfill\newline\indent2010 Mathematics Subject
Classification: 46B45, 47L22, 46G25.\newline\indent Key words: Banach sequence spaces, ideals of homogeneous polynomials, linear stability, finitely determined.}}
\date{}
\maketitle

\begin{abstract} 
In this paper, we explore the concept of multilinear operators that are multiple almost summing and present a new concept of type and cotype of multilinear operators and investigate the conditions for this new concept to recover the original concept of type and type for multilinear operators. We also show that these classes are Banach multi-ideals and establish the coherence and compatibility of these classes with the generated homogeneous polynomials. 

\end{abstract}

\section{Introduction and background}


The notion of multiple summing operators was introduced, independently, in \cite{BG04, M03}, which is based on the successful theory of absolutely summing operators. 


The concept of multiple almost summing operators was initially introduced in \cite{P06} and its main motivation is the theory of almost summing multilinear operators (for more datails, see \cite{P04, PR12}). We explore this concept, showing that this class is a Banach multi-ideal and the sequence of pairs formed by this class and the class of the homogeneous polynomials generated are coherent and compatible with the class of almost summing linear operators. This class has presented itself as a challenge for the authors, because the sequence classes and $m$-sequence classes involved have not been finitely determined, these concepts having been introduced by Botelho and Campos in \cite{BC17} and by Ribeiro and Santos in \cite{RS19}. Among the difficulties encountered, we can highlight the characterization by inequalities so common in the study of classes of multilinear operators.
We also introduce a new concept of type and cotype of multilinear operator, which we call multi-type and multi-cotype, and we show that all multiple almost summing operators have some proper multi-type.

From now on, the letters $E,E_{1},\dots,E_{m},F,G,H$ will represent Banach spaces over the same scalar-field $\mathbb{K}=\mathbb{R}$ or $\mathbb{C}${\bf,} and $E'$ stands for the topological dual of $E$. The closed unit ball of $E$ is denoted by $B_E$. We use BAN to denote the class of all Banach spaces over $\mathbb{K}$. Given Banach spaces $E$ and $F$, the symbol $E\overset{1}\hookrightarrow F$ means that $E$ is a linear subspace of $F$ and $\Vert x\Vert_{F}\leq \Vert x\Vert_{E}$ for every $x \in E$. By $c_{00}(E; \mathbb{N}^m)$ we denote the set of all $E$-valued finite $m$-sequences, which, as usual, can be regarded as infinite $m$-sequences by completing with zeros. For every $k_1,\dots, k_m\in\mathbb{N}$, $e_{k_1,\dots, k_m} = (x_{j_1,\dots, j_m})_{j_1,\dots, j_m=1}^{\infty}$ is an $m$-sequence defined by:
\begin{equation*}
	x_{j_1,\dots,j_m} = \left\{\begin{array}{rc}
		1, &\text{if} \quad j_1=k_1,\dots, j_m=k_m\\
		0, &\text{otherwise}.
	\end{array}\right.
\end{equation*}
The space of all continuous $n$-linear operators between $E_1,\dots, E_m$ and $F$ is represented by $\mathcal{L}_m(E_1,\dots, E_m; F)$. If $E_1 = \cdots = E_m = E$ , we write simply $\mathcal{L}_m(^mE; F)$ and when $m=1$, we write $\mathcal{L}(E; F)$.

A mapping $P \colon E\rightarrow F$ is said to be {\it $m$-homogeneous polynomial} if there is an $A \in \mathcal{L}_m(^mE; F)$, such that $P(x) = A(x)^m$ for every $x \in E$, where $A(x)^m:= A\left(x,\overset{m}{\dots}, x \right)$. In this case, we write $P = \hat{A}$. By $\check{P}$ we denote the unique symmetric continuous $n$-linear operator associated
to P. For each positive integer $m$, we denote by $\mathcal{P}_m$ the class of all continuous $n$-homogeneous polynomial between Banach spaces.

\begin{definition}
	Let $m \in \mathbb{N}$. A Banach ideal of multilinear applications is a pair $\left(\mathcal{M}_m, \|\cdot \|_{\mathcal{M}_m} \right)$ where $\mathcal{M}_m$ is a  subclass of the class of all multilinear operators between Banach spaces and
	
	\begin{equation*}
		\|\cdot \|_{\mathcal{M}_m} : \mathcal{M}_m \longrightarrow \mathbb{R}
	\end{equation*}
	is a function such that, for all Banach spaces $E_1,\dots, E_m, F$, the component
	
	\begin{equation*}
		\mathcal{M}_m(E_1,\dots, E_m; F) := \mathcal{L}(E_1,\dots, E_m; F) \cap \mathcal{M}_m
	\end{equation*}
	is a subspace of $\mathcal{L}(E_1,\dots, E_m; F)$ on which $\|\cdot \|_{\mathcal{M}_m}$ is a complete norm and
	
	\begin{enumerate}
		\item The space of the multilinear operators of finite type is contained in $\mathcal{M}_m(E_1,\dots, E_m; F)$
		
		\item The application $I_m : \mathbb{K}^m \rightarrow \mathbb{K}$ given by $I_n(\lambda_1,\dots, \lambda_m) = \lambda_1 \cdots \lambda_m$ belongs  to $\mathcal{M}_m(\mathbb{K}^m; \mathbb{K})$ and
		
		\begin{equation*}
			\|I_m\|_{\mathcal{M}_m} = 1.
		\end{equation*}
		
		\item $($Multi-ideal property$)$ If $T \in \mathcal{M}_m(E_1,\dots, E_m; F), u_i \in \mathcal{L}(G_i; E_i), i=1,\dots, m$ and $t \in \mathcal{L}(F; H)$ then  $t\circ T \circ (u_1,\dots, u_m) \in \mathcal{M}_m(G_1,\dots, G_m; H)$ and
		
		\begin{equation*}
			\|t\circ T \circ (u_1,\dots, u_m) \|_{\mathcal{M}_m} \le \|t\| \left\|T \right\|_{\mathcal{M}_m} \|u_1\|\cdots \|u_m\|.
		\end{equation*}
	\end{enumerate}
\end{definition}

Analogously, we can define the homogeneous polynomial ideal $\mathcal{Q}$.

\begin{definition}
\begin{itemize}
\item  A class of vector-valued $m$-sequences $\gamma_s\left(\cdot; \mathbb{N}^m\right)$, or simply a $m$-sequence class $\gamma_s\left(\cdot; \mathbb{N}^m\right)$, is a rule that assigns to each $E \in BAN$ a Banach space $\gamma_s(E)$ of $E$-valued sequences;
	that is, $\gamma_s\left(\cdot; \mathbb{N}^m\right)$ is a vector subspace of $E^{\mathbb{N}\times\overset{m}{\cdots}\times \mathbb{N}}$ with coordinate wise operations, such that:
	$$c_{00}\left(E; \mathbb{N}^m\right)\subseteq \gamma_s\left(\cdot; \mathbb{N}^m\right)\overset{1}\hookrightarrow\ell_{\infty}\left(E; \mathbb{N}^m\right)\mbox{ and } \Vert e_{j_1,\dots, j_m}\Vert_{\gamma_s\left(\mathbb{K}; \mathbb{N}^m\right)}=1\mbox{ for every } j_1,\dots, j_m.$$
\item  A $m$-sequence class $\gamma_s\left(\cdot; \mathbb{N}^m\right)$ is {\it finitely determined} if for every sequence $(x_{j_1,\dots, j_m})_{j_1,\dots, j_m=1}^{\infty}\in E^{\mathbb{N}}$, $(x_{j_1,\dots, j_m})_{j_1,\dots, j_m=1}^{\infty}\in \gamma_s(E)$ if, and only if, $\sup_{k_1,\dots, k_m}\Vert (x_{j_1,\dots, j_m})_{j_1,\dots, j_m=1}^{k_1,\dots, k_m}\Vert_{\gamma_s\left(E; \mathbb{N}^m\right)}<+{\infty}$ and, in this case, $$\Vert (x_{j_1,\dots, j_m})_{{j_1,\dots, j_m}=1}^{\infty}\Vert_{\gamma_s(E)}=\sup_{k_1,\dots, k_m}\Vert (x_{j_1,\dots, j_m})_{j_1,\dots, j_m=1}^{k_1,\dots, k_m}\Vert_{\gamma_s(E)}.$$
\item  	A $m$-sequence class $\gamma_s\left(\cdot; \mathbb{N}^m\right)$ is said to be linearly stable if for every $u \in \mathcal{L}(E; F)$ 
	\begin{equation*}
	\left(u\left(x_{j_1,\dots, j_m} \right)\right)_{j_1,\dots, j_m=1}^{\infty} \in \gamma_s\left(F; \mathbb{N}^m\right)
	\end{equation*}
	 wherever $\left(x_{j_1,\dots, j_m} \right)_{j_1,\dots, j_m=1}^{\infty} \in \gamma_s\left(E; \mathbb{N}^m\right)$ and $\|\hat{u} : \gamma_s\left(E; \mathbb{N}^m\right) \rightarrow \gamma_s\left(F; \mathbb{N}^m\right)\| = \|u\|$.
\item  	Given sequence classes $\gamma_{s_1},\dots,\gamma_{s_m}$ and an $m$-sequence class $\gamma_s\left(\cdot; \mathbb{N}^m\right)$, we say that $\gamma_{s_1}(\mathbb{K})\cdots\gamma_{s_m}(\mathbb{K})\overset{multi, 1}\hookrightarrow\gamma_s\left(\mathbb{K}; \mathbb{N}^m\right)$ if $\left(\lambda_{j_1}^{(1)}\cdots\lambda_{j_m}^{(m)}\right)_{j_{j_1,\dots, m}=1}^{\infty}\in\gamma_s\left(\mathbb{K}; \mathbb{N}^m\right)$ and
	$$\left\Vert \left(\lambda_{j_1}^{(1)}\cdots\lambda_{j_m}^{(m)}\right)_{j_1,\dots, j_m=1}^{\infty}\right\Vert_{\gamma_s\left(\mathbb{K}; \mathbb{N}^m\right)}\le\prod_{m=1}^{n}\left\Vert\left(\lambda_{j}^{(m)}\right)_{j=1}^{\infty}\right\Vert_{\gamma_{s_m}(\mathbb{K})}$$
	whenever $\left(\lambda_{j}^{(i)}\right)_{j=1}^{\infty}\in\gamma_{s_i}(\mathbb{K})$, $i=1,\dots,m$.
\end{itemize}

These concepts recover the concepts introduced by Botelho and Campos in \cite{BC17}.

\begin{definition}\label{DDD.2.10.}
	Let $\gamma_s\left(\cdot; \mathbb{N}^m \right)$ be an $m$-sequence class, for every $m \in \mathbb{N}$ and let  $E$ be  a  Banach space. We say that the $m$-sequence class $\gamma_s\left(\cdot; \mathbb{N}^m \right)$ is sequentially compatible with $\gamma_s(\cdot) := \gamma_s\left(\cdot; \mathbb{N}^1 \right)$, if  for every $(x_{j_1,\dots, j_m})_{j_1,\dots, j_m=1}^{\infty} \in \gamma_s\left(E; \mathbb{N}^m \right)$, we have that  $(x_j)_{j=1}^{\infty} := (x_{j,\dots, j})_{j=1}^{\infty} \in \gamma_s(E)$ and
	\begin{equation*}
		\left\|(x_{j})_{j=1}^{\infty} \right\|_{\gamma_s(E)} \le \left\|(x_{j_1,\dots, j_m})_{j_1,\dots, j_m = 1}^{\infty} \right\|_{\gamma_s\left(E; \mathbb{N}^m \right)}.
	\end{equation*}
\end{definition}

A class that will play an important role in the course of this paper is the following: 
\begin{definition}
	Let $1 < p \le 2$. A continuous linear operator $u \colon E \rightarrow F$ is almost $p$-summing if $(u(x_j))_{j=1}^{\infty} \in Rad(F)$ whenever $(x_j)_{j=1}^{\infty} \in \ell_p^u(E)$. The class of all almost $p$-summing operators between Banach spaces is denoted by $\prod_{al, p}$.
\end{definition}
For more details regarding this class we refer to  \cite{DJT95}.

\end{definition}

\section{Multiple almost summing operators}

In this section, we will introduce the concept of multiple almost summing multilinear operators and present some of their properties.  

\begin{definition}
	Let $E$ be a Banach space. The linear space of all $m$-sequences $(x_{j_1,\dots, j_m})_{j_1,\dots, j_m = 1}^{\infty} \subset E^{\mathbb{N}^m}$ such that $\displaystyle\sum_{j_1,\dots, j_m = 1}^{n_1,\dots, n_m}r_{j_1}(t_1)\cdots r_{j_m}(t_m)x_{j_1,\dots, j_m}$ is convergent in $L_2([0, 1]^m; E)$ in almost all $(t_1,\dots, t_m) \in [0, 1]^m$ is denoted by $Rad\left(E; \mathbb{N}^m\right)$. 
\end{definition}

The $Rad\left(E; \mathbb{N}^m\right)$ is a Banach space when equipped with the norm 
\begin{equation*}
	\left\|(x_{j_1,\dots, j_m})_{j_1,\dots, j_m = 1}^{\infty}\right\|_{Rad\left(E; \mathbb{N}^m\right)} = \left(\int_{[0, 1]^m}\left\|\sum_{j_1,\dots, j_m = 1}^{\infty}r_{j_1}(t_1)\cdots r_{j_m}(t_m)x_{j_1,\dots, j_m} \right\|^2dt_1\cdots dt_m\right)^{\frac{1}{2}}.
\end{equation*}

\begin{remark}\label{RCL2}
	\begin{description}
		\item[(a)]  When $E = \mathbb{K}$, it is not difficult to see that $Rad\left(\mathbb{K}; \mathbb{N}^m\right) = \ell_2\left(\mathbb{K}; \mathbb{N}^m\right)$ and
		\begin{equation*}
			\left\|(x_{j_1,\dots, j_m})_{j_1,\dots, j_m = 1}^{\infty}\right\|_{Rad\left(\mathbb{K}; \mathbb{N}^m\right)} = \left\|(x_{j_1,\dots, j_m})_{j_1,\dots, j_m = 1}^{\infty}\right\|_{\ell_2\left(\mathbb{K}; \mathbb{N}^m\right)}
		\end{equation*}
	for every $(x_{j_1,\dots, j_m})_{j_1,\dots, j_m = 1}^{\infty} \in Rad\left(\mathbb{K}; \mathbb{N}^m\right)$.
		\item[(b)] If $(x_{j_1,\dots, j_m})_{j_1,\dots, j_m=1}^{\infty} \in Rad\left(E; \mathbb{N}^m\right)$ then
		\begin{align*}
			&\int_{[0, 1]^m}\left\|\sum_{j_1,\dots, j_m = 1}^{\infty}r_{j_1}(t_1)\cdots r_{j_m}(t_m)x_{j_1,\dots, j_m}\right\|^2dt_1\cdots dt_m\\
			&= \lim_{n_1,\dots, n_m \rightarrow \infty}\int_{[0, 1]^m}\left\|\sum_{j_1,\dots, j_m = 1}^{n_1,\dots, n_m}r_{j_1}(t_1)\cdots r_{j_m}(t_m)x_{j_1,\dots, j_m}\right\|^2dt_1\cdots dt_m.
		\end{align*}
	\end{description}
\end{remark}

Using the same notation and ideas as in  \cite[Contraction Principle $(12.2)$]{DJT95}, we can prove the following result.

\begin{proposition}\label{MPC}
	Let $1<p<\infty$ and let  $E$ be a Banach space. If $\chi_{j_1},\dots, \chi_{j_m}$, $j_1,\dots, j_m \in \mathbb{N}$, are independent symmetric real-valued random variables on a	probability space $(\Omega, \Sigma, P)$ and $\{x_{j_1,\dots, j_m}\}_{j_1,\dots, j_m=1}^{n_1,\dots, n_m} \subset E$, then, regardless of the choice of real
	numbers $a_{j_1,\dots, j_m}$
	\begin{align*}
		&\int_{\Omega^m}\left\|\sum_{j_1,\dots, j_m}^{n_1,\dots, n_m}a_{j_1,\dots, j_m}\chi_{j_1}(\omega_1)\cdots \chi_{j_m}(\omega_m)x_{j_1,\dots, j_m} \right\|^pdP(\omega_1)\cdots dP(\omega_m)\\
		&\le \left(\max_{j_1,\dots, j_m}|a_{j_1,\dots, j_m}|\right)\int_{\Omega^m}\left\|\sum_{j_1,\dots, j_m}^{n_1,\dots, n_m}\chi_{j_1}(\omega_1)\cdots \chi_{j_m}(\omega_m)x_{j_1,\dots, j_m} \right\|^pdP(\omega_1)\cdots dP(\omega_m).
	\end{align*}
In particular, if $A, B \subset \{j_1,\dots, j_m\}_{j_1,\dots, j_m=1}^{n_1,\dots, n_m}$ with $A \subset B$, then
\begin{align*}
	&\int_{\Omega^m}\left\|\sum_{j_1,\dots, j_m \in A}\chi_{j_1}(\omega_1)\cdots \chi_{j_m}(\omega_m)x_{j_1,\dots, j_m} \right\|^pdP(\omega_1)\cdots dP(\omega_m)\\
	&\le \int_{\Omega^m}\left\|\sum_{j_1,\dots, j_m \in B}\chi_{j_1}(\omega_1)\cdots \chi_{j_m}(\omega_m)x_{j_1,\dots, j_m} \right\|^pdP(\omega_1)\cdots dP(\omega_m).
\end{align*}
\end{proposition}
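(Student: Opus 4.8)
The plan is to mirror the proof of the classical contraction principle \cite[Contraction Principle (12.2)]{DJT95} by viewing the left-hand side as a convex, positively homogeneous function of the coefficient array and then reducing the general case to the all-ones array by exploiting the symmetry of the $\chi_k$'s. Concretely, for a fixed finite range of indices write $\xi_{j_1,\dots,j_m}(\omega_1,\dots,\omega_m)=\chi_{j_1}(\omega_1)\cdots\chi_{j_m}(\omega_m)$ and define
\begin{equation*}
\Phi\big((a_{j_1,\dots,j_m})\big)=\int_{\Omega^m}\Big\|\sum_{j_1,\dots,j_m}a_{j_1,\dots,j_m}\,\xi_{j_1,\dots,j_m}(\omega_1,\dots,\omega_m)\,x_{j_1,\dots,j_m}\Big\|^p\,dP(\omega_1)\cdots dP(\omega_m).
\end{equation*}
The goal becomes the estimate $\Phi(a)\le c^p\,\Phi(\mathbf 1)$, where $c=\max_{j_1,\dots,j_m}|a_{j_1,\dots,j_m}|$ and $\mathbf 1$ is the array identically equal to $1$ (homogeneity, below, forces the exponent $p$ on the factor $c$). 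The ``in particular'' statement then follows by taking $a_{j_1,\dots,j_m}=1$ for $(j_1,\dots,j_m)\in A$ and $a_{j_1,\dots,j_m}=0$ otherwise, after discarding the $x_{j_1,\dots,j_m}$ whose indices lie outside $B$, so that $c=1$ and $\Phi(a)=\int_{\Omega^m}\|\sum_{A}\xi x\|^p$, $\Phi(\mathbf 1)=\int_{\Omega^m}\|\sum_{B}\xi x\|^p$.

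First I would establish the two structural properties of $\Phi$ that do not really involve the randomness. For each fixed $(\omega_1,\dots,\omega_m)$ the map $a\mapsto\big\|\sum a_{j_1,\dots,j_m}\xi_{j_1,\dots,j_m}x_{j_1,\dots,j_m}\big\|$ is a seminorm on the finite-dimensional space of coefficient arrays; composing with the convex increasing function $t\mapsto t^p$ (here $p>1$) and integrating against the product measure preserves convexity, so $\Phi$ is convex. It is also positively homogeneous of degree $p$, that is $\Phi(\lambda a)=\lambda^p\Phi(a)$ for $\lambda\ge 0$, directly by pulling the scalar out of the norm. Granting the symmetry reduction below, convexity forces the maximum of $\Phi$ over the cube $[-c,c]^N$, with $N=n_1\cdots n_m$, to be attained at a vertex, and homogeneity converts the value at a vertex into $c^p$ times the value at a $\pm 1$ vertex.

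The decisive step is the symmetry reduction, and this is where I expect the real difficulty to lie. Since each $\chi_k$ is symmetric and the family is independent, replacing $\chi_k(\omega_i)$ by $-\chi_k(\omega_i)$ leaves the joint distribution, hence $\Phi$, unchanged; but in the multilinear setting this single sign change simultaneously flips the entire slice $\{a_{j_1,\dots,j_m}:j_i=k\}$ of the coefficient array rather than one entry. Thus the sign symmetries genuinely available are only the \emph{product-form} changes $a_{j_1,\dots,j_m}\mapsto\big(\prod_{i=1}^m\sigma^{(i)}_{j_i}\big)a_{j_1,\dots,j_m}$ with $\sigma^{(i)}\in\{-1,1\}^{n_i}$, under which $\Phi$ is invariant. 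For $m=1$ these already realize every sign pattern, which is exactly what lets one collapse each vertex of the cube onto the all-ones vertex; for $m\ge 2$ they do not, because a generic element of $\{-1,1\}^N$ is not a rank-one tensor of signs. Consequently the main obstacle is to control $\Phi$ at those vertices whose sign pattern is \emph{not} of product form: one must either argue that $\Phi(\eta)\le\Phi(\mathbf 1)$ for every $\eta\in\{-1,1\}^N$, or replace the vertex argument entirely, and it is precisely here that the passage from the classical contraction principle to its multilinear analogue is delicate. I would therefore scrutinize this step first — isolating whether the product-form symmetry is enough, or whether the comparison with $\Phi(\mathbf 1)$ has to be restricted to product-form coefficient arrays — before committing to the remaining routine estimates.
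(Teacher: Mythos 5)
The paper never actually proves Proposition \ref{MPC}: it only asserts that the ``notation and ideas'' of \cite[Contraction Principle (12.2)]{DJT95} carry over, so there is no official proof to compare yours against. Your proposal is the natural attempt to execute exactly that plan, and everything you do carry out is correct: $\Phi$ is convex and positively homogeneous of degree $p$ (so the factor on the right must indeed be $\left(\max_{j_1,\dots,j_m}|a_{j_1,\dots,j_m}|\right)^p$; the paper's displayed statement, with the first power, is already inconsistent with homogeneity); the maximum of $\Phi$ over the cube is attained at a vertex; and the only sign patterns that the symmetry and independence of the $\chi_j$ allow you to absorb are the product-form ones $\eta_{j_1,\dots,j_m}=\sigma^{(1)}_{j_1}\cdots\sigma^{(m)}_{j_m}$. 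But you stop at the step you yourself call decisive, namely whether $\Phi(\eta)\le\Phi(\mathbf 1)$ for every vertex $\eta\in\{-1,1\}^N$, so what you have is a plan with a hole, not a proof.

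That hole cannot be closed, because for $m\ge 2$ the answer to your question is negative: $\Phi(\eta)\le\Phi(\mathbf 1)$ fails for non-product sign arrays, and not merely by a constant, so Proposition \ref{MPC} as stated (arbitrary real $a_{j_1,\dots,j_m}$, arbitrary $E$) is false. Take $m=2$, $\chi_j$ Rademacher, $n=2^k$, $E=\mathbb{R}^{n\times n}$ normed by $\|c\|=\max_i\left|\sum_{j}c_{ij}\right|+\max_{i,j}|c_{ij}|$, $x_{ij}=e_{ij}$, and $a_{ij}=w_{ij}$ the entries of an $n\times n$ Walsh--Hadamard matrix $W$. Since $|\chi_i(\omega_1)|=1$, the integrand without coefficients is $\left(\left|\sum_j\chi_j(\omega_2)\right|+1\right)^p$, so the right-hand side has order $n^{p/2}$ by Khintchine; with the coefficients $w_{ij}$ the integrand is $\left(\|W\chi(\omega_2)\|_\infty+1\right)^p$, and $\mathbb{E}\|W\chi(\omega_2)\|_\infty\gtrsim\sqrt{n\log n}$ (the coordinates of $W\chi(\omega_2)$ are the Walsh--Fourier coefficients of a random sign vector; a Salem--Zygmund/second-moment argument, using that $Y_S\pm Y_T$ are independent for distinct Walsh characters, gives the $\sqrt{\log n}$ gain). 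So the left side is of order at least $(n\log n)^{p/2}$ while $\max_{i,j}|w_{ij}|=1$, and the ratio blows up like $(\log n)^{p/2}$. Taking $A=\{(i,j):w_{ij}=1\}$ and $B$ the whole index set shows the ``in particular'' clause also fails for sets $A$ that are not products of coordinate sets. What survives --- and is all the paper ever uses (Lemma \ref{UESI} takes $A$ a singleton and $B$ a box; Remark \ref{RCL2}(b) compares nested boxes) --- is the case of product coefficients $a_{j_1,\dots,j_m}=a^{(1)}_{j_1}\cdots a^{(m)}_{j_m}$, equivalently of boxes $A=A_1\times\cdots\times A_m\subseteq B_1\times\cdots\times B_m$. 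That case needs none of the extreme-point machinery: freeze $\omega_2,\dots,\omega_m$, apply the classical one-variable contraction principle in $\omega_1$, integrate, and iterate through the $m$ coordinates; this is exactly what your product-form symmetries amount to. The constructive repair, both of your argument and of the proposition itself, is therefore to restate the result for product coefficients (or nested boxes) and prove it by that one-coordinate-at-a-time induction.
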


The next result, which can be prove as a consequence of the above proposition, will be important in verifying  whether  $Rad\left(\cdot; \mathbb{N}^m\right)$ is an $m$-sequence class.
\begin{lemma}\label{UESI}
	Let $\left(x_{j_1,\dots, j_m}\right)_{j_1,\dots, j_m=1}^{\infty} \in Rad\left(E; \mathbb{N}^m\right)$, then 
	\begin{equation*}
		\|x_{j_1,\dots, j_m}\|_E \le \left\|\left(x_{j_1,\dots, j_m}\right)_{j_1,\dots, j_m=1}^{\infty} \right\|_{Rad\left(E; \mathbb{N}^m\right)},
	\end{equation*}
for every $j_1,\dots, j_m \in \mathbb{N}$.
\end{lemma}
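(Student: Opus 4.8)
The plan is to extract the single coordinate $x_{k_1,\dots,k_m}$ from the Rademacher average by means of the contraction principle established in Proposition \ref{MPC}, applied with $\chi_{j_i} = r_{j_i}$ and $p = 2$. Fix indices $k_1,\dots,k_m \in \mathbb{N}$, and for each choice of $n_i \ge k_i$ consider the finite index block $B = \{(j_1,\dots,j_m) : 1 \le j_i \le n_i,\ i = 1,\dots,m\}$ together with the singleton $A = \{(k_1,\dots,k_m)\} \subset B$. The monotonicity (``in particular'') part of Proposition \ref{MPC} then yields
\begin{equation*}
  \int_{[0,1]^m}\left\| r_{k_1}(t_1)\cdots r_{k_m}(t_m)\, x_{k_1,\dots,k_m}\right\|^2 dt_1\cdots dt_m
  \le \int_{[0,1]^m}\left\|\sum_{j_1,\dots,j_m=1}^{n_1,\dots,n_m} r_{j_1}(t_1)\cdots r_{j_m}(t_m)\, x_{j_1,\dots,j_m}\right\|^2 dt_1\cdots dt_m.
\end{equation*}

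Next I would evaluate the left-hand side explicitly. Since each Rademacher function $r_{k_i}$ takes only the values $\pm 1$, we have $|r_{k_1}(t_1)\cdots r_{k_m}(t_m)| = 1$ for almost every $(t_1,\dots,t_m) \in [0,1]^m$, so the integrand equals the constant $\|x_{k_1,\dots,k_m}\|_E^2$. As $[0,1]^m$ carries total measure one, the left-hand side is therefore exactly $\|x_{k_1,\dots,k_m}\|_E^2$, and for every admissible truncation we obtain
\begin{equation*}
  \|x_{k_1,\dots,k_m}\|_E^2 \le \int_{[0,1]^m}\left\|\sum_{j_1,\dots,j_m=1}^{n_1,\dots,n_m} r_{j_1}(t_1)\cdots r_{j_m}(t_m)\, x_{j_1,\dots,j_m}\right\|^2 dt_1\cdots dt_m.
\end{equation*}

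Finally I would let $n_1,\dots,n_m \to \infty$. By Remark \ref{RCL2}(b) the right-hand side converges to
$$\int_{[0,1]^m}\left\|\sum_{j_1,\dots,j_m=1}^{\infty} r_{j_1}(t_1)\cdots r_{j_m}(t_m)\, x_{j_1,\dots,j_m}\right\|^2 dt_1\cdots dt_m = \left\|(x_{j_1,\dots,j_m})_{j_1,\dots,j_m=1}^{\infty}\right\|_{Rad(E;\mathbb{N}^m)}^2,$$
and taking square roots gives the claimed inequality. The step that requires the most care is precisely this passage from the finite truncations to the full $m$-sequence: the contraction principle is stated only for finite sums, so one must combine it with the convergence guaranteed by Remark \ref{RCL2}(b) (equivalently, with the fact that the truncated integrals increase monotonically in the blocks $B$, again by Proposition \ref{MPC}) in order to legitimately replace the finite partial average by the infinite Rademacher series defining the $Rad(E;\mathbb{N}^m)$-norm.
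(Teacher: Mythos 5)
Your proof is correct and is precisely the argument the paper intends: the paper states Lemma \ref{UESI} only as ``a consequence of the above proposition,'' i.e.\ of the contraction principle (Proposition \ref{MPC}), which is exactly your application of its monotonicity part with the singleton $A=\{(k_1,\dots,k_m)\}$ inside a finite block $B$. Your evaluation of the left-hand side using $|r_{k_1}(t_1)\cdots r_{k_m}(t_m)|=1$ a.e.\ and the passage to the limit via Remark \ref{RCL2}(b) correctly supply the details the paper omits.
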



\begin{theorem}
	Let $m \in \mathbb{N}$ and $E$ be a Banach space. The class $E \mapsto Rad\left(E; \mathbb{N}^m\right)$ is a linearly stable $(n_1,\dots, n_m)$-sequence class.
\end{theorem}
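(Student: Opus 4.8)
The plan is to verify the two defining requirements in turn: first that $E \mapsto Rad\left(E; \mathbb{N}^m\right)$ satisfies the axioms of an $m$-sequence class, and then that it is linearly stable. The completeness of $Rad\left(E; \mathbb{N}^m\right)$ under the stated norm has already been recorded immediately after its definition, so I would take that for granted and concentrate on the inclusion and normalization conditions together with the behaviour under linear operators.

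For the $m$-sequence class axioms, the inclusion $c_{00}\left(E; \mathbb{N}^m\right) \subseteq Rad\left(E; \mathbb{N}^m\right)$ is immediate, since a finitely supported $m$-sequence produces a finite Rademacher sum, which trivially converges in $L_2([0,1]^m; E)$. The contractive embedding $Rad\left(E; \mathbb{N}^m\right) \overset{1}\hookrightarrow \ell_\infty\left(E; \mathbb{N}^m\right)$ is exactly the content of Lemma \ref{UESI}, which gives $\|x_{j_1,\dots,j_m}\|_E \le \left\|(x_{j_1,\dots,j_m})_{j_1,\dots,j_m=1}^{\infty}\right\|_{Rad\left(E; \mathbb{N}^m\right)}$ for every multi-index. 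Finally, the normalization $\left\|e_{j_1,\dots,j_m}\right\|_{Rad\left(\mathbb{K}; \mathbb{N}^m\right)} = 1$ follows from Remark \ref{RCL2}(a), by which $Rad\left(\mathbb{K}; \mathbb{N}^m\right)$ coincides isometrically with $\ell_2\left(\mathbb{K}; \mathbb{N}^m\right)$, where each canonical vector has norm one.

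For linear stability, fix $u \in \mathcal{L}(E; F)$. The key observation is that $u$ induces a bounded operator $\tilde{u} \colon L_2([0,1]^m; E) \to L_2([0,1]^m; F)$ given by $(\tilde{u}f)(t) = u(f(t))$, with $\|\tilde{u}\| = \|u\|$. Given $(x_{j_1,\dots,j_m})_{j_1,\dots,j_m=1}^{\infty} \in Rad\left(E; \mathbb{N}^m\right)$, its partial Rademacher sums converge in $L_2([0,1]^m; E)$; since $\tilde{u}$ is continuous and commutes with these finite sums, the images form the partial sums of $\sum r_{j_1}(t_1)\cdots r_{j_m}(t_m)\, u(x_{j_1,\dots,j_m})$ and hence converge in $L_2([0,1]^m; F)$. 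This shows $(u(x_{j_1,\dots,j_m}))_{j_1,\dots,j_m=1}^{\infty} \in Rad\left(F; \mathbb{N}^m\right)$, and passing the operator norm through the integral yields $\left\|(u(x_{j_1,\dots,j_m}))\right\|_{Rad\left(F; \mathbb{N}^m\right)} \le \|u\| \cdot \left\|(x_{j_1,\dots,j_m})\right\|_{Rad\left(E; \mathbb{N}^m\right)}$, so $\|\hat{u}\| \le \|u\|$.

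To obtain the reverse inequality $\|\hat{u}\| \ge \|u\|$, I would test $\hat{u}$ on the rank-one $m$-sequences $x\cdot e_{1,\dots,1}$ for $x \in E$: since $|r_1(t_1)\cdots r_1(t_m)| = 1$ almost everywhere, such a sequence has $Rad$-norm equal to $\|x\|_E$, while its image under $\hat{u}$ has $Rad$-norm $\|u(x)\|_F$. Taking the supremum over $x \in B_E$ gives $\|\hat{u}\| \ge \|u\|$, hence equality. I do not expect a genuine obstacle here; the only point requiring care is the justification that $\tilde{u}$ transports the $L_2([0,1]^m; E)$-convergence of the Rademacher partial sums to $L_2([0,1]^m; F)$, which rests solely on the continuity of the induced operator and is the step I would write out most carefully.
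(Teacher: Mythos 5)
Your proof is correct, and its skeleton matches the paper's: the $m$-sequence class axioms are settled exactly as the paper settles them (the $c_{00}$ inclusion is immediate, the contractive embedding into $\ell_\infty\left(E;\mathbb{N}^m\right)$ is Lemma \ref{UESI}, and the normalization $\Vert e_{j_1,\dots,j_m}\Vert=1$ is Remark \ref{RCL2}(a)), and the reverse inequality $\Vert\hat u\Vert\ge\Vert u\Vert$ is obtained in both cases by testing on singly supported sequences (you compute the $Rad$-norm of $x\cdot e_{1,\dots,1}$ directly; the paper invokes Lemma \ref{UESI}, which amounts to the same thing). Where you genuinely diverge is the membership step of linear stability. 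The paper opens with a chain of equalities that begins with the integral of the \emph{infinite} series $\sum r_{j_1}\cdots r_{j_m}\,u(x_{j_1,\dots,j_m})$ and reduces it, via Remark \ref{RCL2}(b), to a limit of integrals of finite sums; but Remark \ref{RCL2}(b) applies only to sequences already known to lie in $Rad\left(F;\mathbb{N}^m\right)$, so as written the computation presupposes the convergence it is meant to establish, and a mere uniform bound on partial sums would not repair this, since $Rad\left(\cdot;\mathbb{N}^m\right)$ is not finitely determined (as the paper itself remarks immediately afterwards). Your route through the induced composition operator $\tilde u\colon L_2([0,1]^m;E)\to L_2([0,1]^m;F)$ fixes exactly this point: the partial sums converge in $L_2([0,1]^m;E)$ by hypothesis, $\tilde u$ is continuous and commutes with finite Rademacher sums, hence the image partial sums converge in $L_2([0,1]^m;F)$, which is the definition of membership in $Rad\left(F;\mathbb{N}^m\right)$; the norm estimate $\Vert\hat u\Vert\le\Vert u\Vert$ then falls out as in the paper. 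So your argument buys a rigorous justification of the convergence that the paper's calculation glosses over, at the modest cost of introducing $\tilde u$ explicitly.
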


\begin{proof}
	We will begin by showing that $Rad\left(\cdot; \mathbb{N}^m\right)$ is a $m$-sequence class. It is immediate that, for every Banach space $E$
	\begin{equation*}
		c_{00}\left(E; \mathbb{N}^m\right) \subset Rad\left(E; \mathbb{N}^m\right)
	\end{equation*}
	and by Remark \ref{RCL2} item (a)
\begin{equation*}
	\|e_{n_1,\dots, n_m}\|_{Rad\left(\mathbb{K}; \mathbb{N}^m\right)} = \|e_{j_1,\dots, j_m}\|_{\ell_2\left(\mathbb{K}; \mathbb{N}^m\right)} = 1.
\end{equation*}
Let $(x_{j_1,\dots, j_m})_{j_1,\dots, j_m = 1}^{\infty} \in Rad\left(E; \mathbb{N}^m\right)$, we already know that
\begin{equation*}
	\|x_{j_1,\dots, j_m}\| \le \left\|(x_{j_1,\dots, j_m})_{j_1,\dots, j_m = 1}^{\infty} \right\|_{Rad\left(E; \mathbb{N}^m\right)},
\end{equation*}
for every $j_1,\dots, j_m \in \mathbb{N}$. Then,
\begin{equation*}
	\sup_{j_1,\dots, j_m} \|x_{j_1,\dots, j_m}\| \le \left\|(x_{j_1,\dots, j_m})_{j_1,\dots, j_m = 1}^{\infty} \right\|_{Rad\left(E; \mathbb{N}^m\right)}
\end{equation*}
and therefore, $Rad\left(E; \mathbb{N}^m\right) \overset{1}{\hookrightarrow} \ell_{\infty}\left(E; \mathbb{N}^m\right)$. 

Now, we will prove that this class is linearly stable. Let $u \in \mathcal{L}(E; F)$ and $(x_{j_1,\dots, j_m})_{j_1,\dots, j_m = 1}^{\infty} \in Rad\left(E; \mathbb{N}^m\right)$. So,
\begin{align*}
	&\int_{[0,1]^m}\left\|\sum_{j_1,\dots, j_m = 1}^{\infty}r_{j_1}(t_1)\cdots r_{j_m}(t_m)u\left(x_{j_1,\dots, j_m}\right) \right\|^2dt_1\cdots dt_m\\
	&= \lim_{n_1,\dots, n_m \rightarrow \infty}\int_{[0,1]^m}\left\|\sum_{j_1,\dots, j_m = 1}^{n_1,\dots, n_m}r_{j_1}(t_1)\cdots r_{j_m}(t_m)u\left(x_{j_1,\dots, j_m}\right) \right\|^2dt_1\cdots dt_m\\
	&= \lim_{n_1,\dots, n_m \rightarrow \infty}\int_{[0,1]^m}\left\|u\left(\sum_{j_1,\dots, j_m = 1}^{n_1,\dots, n_m}r_{j_1}(t_1)\cdots r_{j_m}(t_m)x_{j_1,\dots, j_m}\right) \right\|^2dt_1\cdots dt_m\\
	&\le \|u\|^2\int_{[0,1]^m}\left\|\sum_{j_1,\dots, j_m = 1}^{n_1,\dots, n_m}r_{j_1}(t_1)\cdots r_{j_m}(t_m)x_{j_1,\dots, j_m} \right\|^2dt_1\cdots dt_m.
\end{align*}
Then, $(u\left(x_{j_1,\dots, j_m}\right))_{j_1,\dots, j_m = 1}^{\infty} \in Rad\left(F; \mathbb{N}^m\right)$. Consider the induced operator
\begin{equation*}
	\hat{u}\colon Rad\left(E; \mathbb{N}^m\right) \rightarrow Rad\left(F; \mathbb{N}^m\right)
\end{equation*}
given by $\hat{u}\left((x_{j_1,\dots, j_m})_{j_1,\dots, j_m=1}^{\infty}\right) = \left(u(x_{j_1,\dots, j_m})\right)_{j_1,\dots, j_m=1}^{\infty}$. Let  $(x_{j_1,\dots, j_m})_{j_1,\dots, j_m = 1}^{\infty} \in B_{Rad\left(E; \mathbb{N}^m\right)}$, by the above calculations
\begin{align*}
	\|\hat{u}\| &= \sup_{\|(x_{j_1,\dots, j_m})_{j_1,\dots, j_m = 1}^{\infty}\|_{Rad\left(E; \mathbb{N}^m\right)}=1}\|\hat{u}\left((x_{j_1,\dots, j_m})_{j_1,\dots, J_m=1}^{\infty}\right)\|_{Rad\left(F; \mathbb{N}^m\right)}\\
	&=\sup_{\|(x_{j_1,\dots, j_m})_{j_1,\dots, j_m = 1}^{\infty}\|_{Rad\left(E; \mathbb{N}^m\right)}=1}\left(\int_{[0,1]^m}\left\|\sum_{j_1,\dots, j_m = 1}^{\infty}r_{j_1}(t_1)\cdots r_{j_m}(t_m)u(x_{j_1,\dots, j_m}) \right\|^2dt_1\cdots dt_m\right)^{\frac{1}{2}}\\
	&\le \|u\|\sup_{\|(x_{j_1,\dots, j_m})_{j_1,\dots, j_m = 1}^{\infty}\|_{Rad\left(E; \mathbb{N}^m\right)}=1}\|(x_{j_1,\dots, j_m})_{j_1,\dots, J_m=1}^{\infty}\|_{Rad\left(E; \mathbb{N}^m\right)}\\
	&= \|u\|.
\end{align*}
The other inequality is as immediate consequence of Lemma \ref{UESI}.
\end{proof}

\begin{remark}
	As in the sequence case, this $(n_1,\dots, n_m)$-sequences class is not finitely determined.
\end{remark}

The concept of multiple $m$-linear almost summing was initially introduced in \cite{P06} and explored in other papers of  which we highlight \cite{P12, P14}. Let $1 < p_1,\dots, p_m \le 2$, a continuous $m$-linear operator $T \in \mathcal{L}(E_1,\dots, E_m; F)$ is said to be  multiple almost $(p_1,\dots, p_m)$-summing if 
\begin{equation*}
	\left(T\left(x_{j_1}^{(1)},\dots, x_{j_m}^{(m)}\right)\right)_{j_1,\dots, j_m=1}^{\infty} \in Rad\left(F; \mathbb{N}^n\right)
\end{equation*}
whenever $\left(x_j^{(i)}\right)_{j=1}^{\infty} \in \ell_{p_i}^u(E_i)$, $i=1,\dots, m$. The linear space of all  multiple almost $(p_1,\dots, p_m)$-summing operators from $E_1\times \cdots \times E_m$ to $F$ is denoted by $\mathcal{L}_{al, (p_1,\dots, p_m)}^{mult}(E_1,\dots, E_m; F)$. When $E_1 =\cdots= E_m=E$, we just write $\mathcal{L}_{al, (p_1,\dots, p_m)}^{mult}(^mE; F)$. When $p_1=\cdots=p_m=p${\bf, } we just write $\mathcal{L}_{al, p}^{mult}(E_1,\dots, E_m; F)$ and $\mathcal{L}_{al, p}^{mult}(^mE; F)$.

\begin{example}\label{EX1}
	Let $0 \neq \varphi_i \in E_i'$, $i=1,\dots, m$, $1 < p_1,\dots, p_{m+1} \le 2$ and $u \in \mathcal{\prod}_{al, p_{m+1}}(E_{m+1}; F)$. Define,
	\begin{equation*}
		\varphi_1 \otimes \cdots \otimes \varphi_m \otimes u \colon E_1\times \cdots \times E_{m+1} \rightarrow F
	\end{equation*}
given by, 
\begin{equation*}
	\varphi_1 \otimes \cdots \otimes \varphi_m \otimes u (x_1,\dots, x_m, x_{m+1}) = \varphi_1(x_1)\cdots \varphi_m(x_m) u(x_{m+1}).
\end{equation*}
The operator $\varphi_1 \otimes \cdots \otimes \varphi_m \otimes u$ is multiple almost $(p_1,\dots, p_{m+1})$-summing. Indeed, let $\left(x_{j}^{(i)}\right)_{j=1}^{\infty} \in \ell_{p_i}^u(E_i)$, $i=1,\dots, m+1$
\begin{align*}
	&\int_{[0, 1]^{m+1}}\left\|\sum_{j_1,\dots, j_{m+1}=1}^{n_1, \dots, n_{m+1}}r_{j_1}(t_1)\cdots r_{j_{m+1}}(t_{m+1})\varphi_1 \otimes \cdots \otimes \varphi_m \otimes u \left(x_{j_1}^{(1)},\dots, x_{j_{m+1}}^{(m+1)}\right) \right\|^2dt_1\cdots dt_{m+1}	\\
	=& \int_{[0, 1]^{m+1}}\left\|\sum_{j_1,\dots, j_{m+1}=1}^{n_1, \dots, n_{m+1}}r_{j_1}(t_1)\cdots r_{j_{m+1}}(t_{m+1})\varphi_1\left(x_{j_1}^{(1)}\right)\cdots \varphi_m\left(x_{j_m}^{(m)}\right)u\left(x_{j_{m+1}}^{(m+1)}\right) \right\|^2dt_1\cdots dt_{m+1}	\\
	=& \left(\int_{[0, 1]^{m}}\left|\sum_{j_1,\dots, j_{m+1}=1}^{n_1, \dots, n_m}r_{j_1}(t_1)\cdots r_{j_{m}}(t_{m})\varphi_1\left(x_{j_1}^{(1)}\right)\cdots \varphi_m\left(x_{j_m}^{(m)}\right) \right|^2dt_1\cdots dt_{m}\right)\\ &\times\left(\int_0^1\left\|\sum_{j_{m+1}}^{n_m}r_{m+1}(t_{m+1})u\left(x_{j_{m+1}}^{(m+1)}\right) \right\|^2dt_{m+1}\right)	\\
	&= \sum_{j_1,\dots, j_{m}=1}^{n_1,\dots, n_m}\left|\varphi_1\left(x_{j_1}^{(1)}\right)\cdots \varphi_m\left(x_{j_m}^{(m)}\right)\right|^2 \times \left(\int_0^1\left\|\sum_{j_{m+1}}^{n_m}r_{m+1}(t_{m+1})u\left(x_{j_{m+1}}^{(m+1)}\right) \right\|^2dt_{m+1}\right).
\end{align*}
Then, 
\begin{equation*}
	\left(\varphi_1 \otimes \cdots \otimes \varphi_m \otimes u \left(x_{j_1}^{(1)},\dots, x_{j_m}^{(m)}, x_{j_{m+1}}^{(m+1)}\right) \right)_{j_1,\dots, j_{m+1}=1}^{\infty} \in Rad\left(F; \mathbb{N}^{m+1}\right)
\end{equation*}
whenever $\left(x_{j}^{(i)}\right)_{j=1}^{\infty} \in \ell_{p_i}^u(E_i)$, $i=1,\dots, m+1$. Therefore, $\varphi_1 \otimes \cdots \otimes \varphi_m \otimes u$ is a multiple almost $(p_1,\dots, p_{m+1})$-summing multilinear operator.
\end{example}

In the next result, we will show a characterization of the concept of multiple almost summing multilinear operators.

\begin{proposition}\label{CONT}
	Let $1 < p_1,\dots, p_m \le 2$. A continuous $m$-linear operator $T \in \mathcal{L}(E_1,\dots, E_m; F)$ is multiple almost $(p_1,\dots, p_m)$-summing if and only if the induced operator
	\begin{equation*}
		\overset{\wedge}{T}\colon \ell_{p_1}^u(E_1)\times \cdots \times \ell_{p_m}^u(E_m) \rightarrow Rad\left(F; \mathbb{N}^n\right)
	\end{equation*}
given by 
\begin{equation*}
	\overset{\wedge}{T}\left(\left(x_j^{(1)}\right)_{j=1}^{\infty},\dots, \left(x_j^{(m)}\right)_{j=1}^{\infty}\right) = \left(T\left(x_{j_1}^{(1)},\dots, x_{j_m}^{(m)}\right)\right)_{j_1,\dots, j_m=1}^{\infty}
\end{equation*}
is an well-defined and continuous $m$-linear operator.
\end{proposition}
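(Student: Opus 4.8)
The plan is to dispose of the backward implication at once and then concentrate all the work on proving continuity in the forward implication, since this is exactly where the absence of a finitely determined structure rules out the usual inequality-based shortcut and forces an abstract argument.

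For the ($\Leftarrow$) direction there is essentially nothing to do: if $\hat{T}$ is well-defined then, by the very formula defining $\hat{T}$, the $m$-sequence $\left(T\left(x_{j_1}^{(1)},\dots, x_{j_m}^{(m)}\right)\right)_{j_1,\dots, j_m=1}^{\infty}$ belongs to $Rad\left(F; \mathbb{N}^m\right)$ for every admissible choice $\left(x_j^{(i)}\right)_{j=1}^{\infty} \in \ell_{p_i}^u(E_i)$, and this is precisely the defining condition for $T$ to be multiple almost $(p_1,\dots, p_m)$-summing. For the ($\Rightarrow$) direction, well-definedness of $\hat{T}$ is again immediate from the definition of multiple almost summing, and $m$-linearity follows from the linearity of $T$ in each variable together with the coordinatewise operations on the sequence spaces. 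The only genuine content is continuity, and since neither $\ell_{p_i}^u(\cdot)$ nor $Rad\left(\cdot; \mathbb{N}^m\right)$ is finitely determined one cannot read off a boundedness constant from an inequality; instead I would obtain continuity through a Closed Graph argument followed by the uniform boundedness principle.

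Concretely, I would first prove that $\hat{T}$ is separately continuous. Fixing all arguments except the $i$-th, consider the partial map $P_i\colon \ell_{p_i}^u(E_i) \to Rad\left(F; \mathbb{N}^m\right)$, which is linear between Banach spaces, and verify that it has closed graph. Suppose a sequence in $\ell_{p_i}^u(E_i)$ converges and that its images under $P_i$ converge in $Rad\left(F; \mathbb{N}^m\right)$. The embedding $\ell_{p_i}^u(E_i) \overset{1}{\hookrightarrow} \ell_\infty(E_i)$ yields coordinatewise convergence of the inputs in $E_i$, whence continuity of $T$ in its $i$-th variable gives convergence of each fixed coordinate $T\left(x_{j_1}^{(1)},\dots, x_{j_m}^{(m)}\right)$ in $F$; on the other hand, Lemma \ref{UESI} shows that convergence in $Rad\left(F; \mathbb{N}^m\right)$ forces coordinatewise convergence in $F$. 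Matching these two limits coordinate by coordinate and using uniqueness of limits in $F$ identifies the limit of the images with $P_i$ applied to the limit input, so the graph is closed and $P_i$ is continuous by the Closed Graph Theorem.

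Finally, having separate continuity, I would invoke the uniform boundedness principle to upgrade it to joint continuity, i.e. boundedness, of the $m$-linear map $\hat{T}$: fixing the unit ball in one variable produces a family of bounded operators in the remaining variables that is pointwise bounded (by separate continuity), so Banach--Steinhaus gives a uniform bound, and iterating this over the $m$ variables yields a single constant $C$ with $\left\|\hat{T}\left(\left(x_j^{(1)}\right),\dots,\left(x_j^{(m)}\right)\right)\right\|_{Rad\left(F; \mathbb{N}^m\right)} \le C \prod_{i=1}^{m}\left\|\left(x_j^{(i)}\right)\right\|_{\ell_{p_i}^u(E_i)}$. I expect the main obstacle to be exactly this passage from separate to joint continuity: the Banach--Steinhaus iteration has to be organized carefully in the $m$-linear setting, precisely because the usual device of extracting the constant from one explicit inequality is unavailable here, both $\ell_{p_i}^u(\cdot)$ and $Rad\left(\cdot; \mathbb{N}^m\right)$ failing to be finitely determined.
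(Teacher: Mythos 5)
Your proposal is correct, and its core is the same device the paper uses: a closed-graph argument in which the limit is identified coordinatewise, using the sup-norm embedding of the domain sequence spaces to get coordinatewise convergence of the inputs, Lemma \ref{UESI} to get coordinatewise convergence of the images in $F$, and the continuity of $T$ plus uniqueness of limits in $F$ to match the two. The difference is in how closedness of the graph is converted into continuity. The paper applies the closed-graph test directly to the $m$-linear map $\hat{T}$ and concludes continuity in one stroke, which implicitly invokes a closed graph theorem for \emph{multilinear} mappings between Banach spaces --- a true but nonstandard fact that the paper does not justify. You instead apply the classical (linear) Closed Graph Theorem to each partial map $P_i$ obtained by freezing the other $m-1$ variables, thereby getting separate continuity, and then upgrade to joint continuity by a Banach--Steinhaus iteration. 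Your decomposition is the more scrupulous one, since it rests only on the two classical linear principles, at the cost of the extra (standard, but as you note, not entirely trivial to organize) separate-to-joint continuity step; the paper's route is shorter but leaves the multilinear closed-graph step as an unstated black box. Your treatment of the backward implication and of well-definedness coincides with the paper's, which also dismisses them as immediate from the definition.
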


\begin{proof}
	It is immediate from the definition of multiple almost $(p_1,\dots, p_m)$-summing operators that $\hat{T}$ is an  well-defined $m$-linear operator. Now, we will prove the continuity. 
	
	Let
	\begin{equation*}
		\left(\left(x_j^{k, (1)}\right)_{j=1}^{\infty},\dots, \left(x_j^{k, (m)}\right)_{j=1}^{\infty}, \left(T\left(x_{j_1}^{k, (1)},\dots, x_{j_m}^{k, (m)}\right)\right)_{j_1,\dots, j_m = 1}^{\infty}\right)_{k=1}^{\infty}
	\end{equation*}
be a sequence on the graph of $T$  such that
\begin{equation*}
	\left(x_j^{k, (i)}\right)_{j=1}^{\infty} \underset{k}{\longmapsto} \left(x_j^{(i)}\right)_{j=1}^{\infty} \text{, } i=1,\dots, m
\end{equation*}
and
\begin{equation*}
	T\left(x_{j_1}^{k, (1)},\dots, x_{j_m}^{k, (m)}\right)_{j_1,\dots, j_m=1}^{\infty} \underset{k}{\longmapsto} \left(z_{j_1,\dots, j_m}\right)_{j_1,\dots, j_m=1}^{\infty}.
\end{equation*}
So, for every $i = \{1,\dots, m\}$
\begin{equation*}
	x_j^{k, (i)} \underset{k}{\longmapsto} x_j^{(i)} \text{ and } T\left(x_{j_1}^{k, (1)},\dots, x_{j_m}^{k, (m)}\right) \underset{k}{\longmapsto} z_{j_1,\dots, j_m}\text{, } j_1,\dots, j_m \in \mathbb{N}.
\end{equation*}
By the continuity of $T$ and the uniqueness of the limit, we have that
\begin{equation*}
	T\left(x_{j_1}^{(1)},\dots, x_{j_m}^{(m)}\right) = z_{j_1,\dots, j_m}
\end{equation*}
for every $j_1,\dots, j_m \in \mathbb{N}$. Then,
\begin{equation*}
	\left(T\left(x_{j_1}^{(1)},\dots, x_{j_m}^{(m)}\right)\right)_{j_1,\dots, j_m=1}^{\infty} = \left(z_{j_1,\dots, j_m}\right)_{j_1,\dots, j_m=1}^{\infty}.
\end{equation*}
Thus,  $\hat{T}$ has a closed graph and therefore it is continuous.
\end{proof}

We define a norm in $\mathcal{L}_{al, (p_1,\dots, p_m)}^{mult}(E_1,\dots, E_m; F)$ by
\begin{equation*}
	\|T\|_{\mathcal{L}_{al, (p_1,\dots, p_m)}^{mult}} := \left\|\overset{\wedge}{T}\right\|.
\end{equation*}

\begin{remark}\label{Norm}
	For every $T \in \mathcal{L}_{al, (p_1,\dots, p_m)}^{mult}(E_1,\dots, E_m; F)$ we have that
	\begin{equation*}
		\|T\| \le \|T\|_{\mathcal{L}_{al, (p_1,\dots, p_m)}^{mult}}.
	\end{equation*}
\end{remark}

\begin{theorem}\label{COMPL}
	Let $0<p_1,\dots, p_m<\infty$ and $E_1,\dots, E_m, F$ be Banach spaces. Then, $\mathcal{L}_{al, (p_1,\dots, p_m)}^{mult}(E_1,\dots, E_m; F)$ is a Banach space.
\end{theorem}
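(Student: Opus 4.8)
The plan is to realize the assignment $T \mapsto \hat{T}$ as a linear isometry $\Phi$ from $\mathcal{L}_{al, (p_1,\dots, p_m)}^{mult}(E_1,\dots, E_m; F)$ into the space $\mathcal{L}\big(\ell_{p_1}^u(E_1),\dots, \ell_{p_m}^u(E_m); Rad(F; \mathbb{N}^m)\big)$ of all continuous $m$-linear operators, which is itself a Banach space because $Rad(F;\mathbb{N}^m)$ is complete. By definition $\|T\|_{\mathcal{L}_{al}^{mult}} = \|\hat T\|$, so $\Phi$ is isometric; in particular $\|\cdot\|_{\mathcal{L}_{al}^{mult}}$ is a genuine norm, since $\|T\|_{\mathcal{L}_{al}^{mult}}=0$ gives $\|T\|=0$ by Remark~\ref{Norm} and hence $T=0$. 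Completeness will then follow once I show that any Cauchy sequence in $\mathcal{L}_{al}^{mult}$ converges in it, equivalently that the image of $\Phi$ is closed.

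I would take a Cauchy sequence $(T_k)_k$ in $\mathcal{L}_{al, (p_1,\dots, p_m)}^{mult}(E_1,\dots, E_m; F)$ and produce its limit in two stages. By Remark~\ref{Norm}, $\|T_k - T_l\| \le \|T_k - T_l\|_{\mathcal{L}_{al}^{mult}}$, so $(T_k)_k$ is also Cauchy in the Banach space $\mathcal{L}(E_1,\dots, E_m; F)$; hence $T_k \to T$ in the usual operator norm for some continuous $m$-linear $T$. On the other hand $(\hat{T}_k)_k$ is Cauchy in the complete space $\mathcal{L}\big(\ell_{p_1}^u(E_1),\dots, \ell_{p_m}^u(E_m); Rad(F; \mathbb{N}^m)\big)$, so $\hat{T}_k \to S$ for some continuous $m$-linear $S$ with values in $Rad(F;\mathbb{N}^m)$. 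The crux is then to identify $S$ with the induced operator of $T$.

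To do so, I fix $\big(x_j^{(i)}\big)_{j=1}^{\infty} \in \ell_{p_i}^u(E_i)$, $i=1,\dots,m$. From $\hat{T}_k \to S$ and Lemma~\ref{UESI}, for each fixed multi-index $(j_1,\dots,j_m)$ the corresponding coordinate $T_k\big(x_{j_1}^{(1)},\dots, x_{j_m}^{(m)}\big)$ of $\hat{T}_k\big((x_j^{(1)}),\dots,(x_j^{(m)})\big)$ converges in $F$ to the $(j_1,\dots,j_m)$-coordinate of $S\big((x_j^{(1)}),\dots,(x_j^{(m)})\big)$. At the same time $T_k \to T$ in operator norm forces $T_k\big(x_{j_1}^{(1)},\dots, x_{j_m}^{(m)}\big) \to T\big(x_{j_1}^{(1)},\dots, x_{j_m}^{(m)}\big)$. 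By uniqueness of limits these two coordinate limits coincide, so
$$S\Big(\big(x_j^{(1)}\big)_{j},\dots,\big(x_j^{(m)}\big)_{j}\Big) = \Big(T\big(x_{j_1}^{(1)},\dots, x_{j_m}^{(m)}\big)\Big)_{j_1,\dots,j_m=1}^{\infty} \in Rad(F; \mathbb{N}^m).$$
This says precisely that $T$ is multiple almost $(p_1,\dots,p_m)$-summing and that $\hat{T}=S$, so $T \in \mathcal{L}_{al, (p_1,\dots, p_m)}^{mult}(E_1,\dots, E_m; F)$. Finally $\|T_k - T\|_{\mathcal{L}_{al}^{mult}} = \|\hat{T}_k - \hat{T}\| = \|\hat{T}_k - S\| \to 0$, giving $T_k \to T$ in $\mathcal{L}_{al}^{mult}$.

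The routine parts are the linearity and isometry of $\Phi$ and the completeness of the two ambient operator spaces; the step that deserves care is the identification $S=\hat T$. The subtlety is that $Rad(F;\mathbb{N}^m)$ is \emph{not} finitely determined, so convergence there is not a mere coordinatewise condition and one cannot read the limit off entry by entry a priori. Lemma~\ref{UESI}, which dominates each coordinate by the full $Rad$-norm, is exactly what bridges this gap: it yields coordinatewise convergence of $\hat T_k$ to $S$, after which uniqueness of limits against the operator-norm limit $T$ pins $S$ down. I expect this identification to be the main obstacle, the remainder being a direct assembly of the completeness facts above.
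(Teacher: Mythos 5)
Your proposal is correct and follows essentially the same route as the paper's own proof: both pass to the Cauchy sequence $(\hat{T}_k)_k$ in the complete space $\mathcal{L}\bigl(\ell_{p_1}^u(E_1),\dots,\ell_{p_m}^u(E_m); Rad(F;\mathbb{N}^m)\bigr)$, obtain a limit operator there, use Remark~\ref{Norm} to get an operator-norm limit $T$, and then invoke Lemma~\ref{UESI} plus uniqueness of limits to identify the two limits coordinatewise. Your explicit final step showing $\|T_k - T\|_{\mathcal{L}_{al,(p_1,\dots,p_m)}^{mult}} \to 0$ is a small but welcome completion of what the paper leaves implicit.
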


\begin{proof}
	Let $(T_k)_{k=1}^{\infty}$ be a Cauchy sequence in  $\mathcal{L}_{al, (p_1,\dots, p_m)}^{mult}(E_1,\dots, E_m; F)$, so
	\begin{equation*}
		\left\|T_k - T_l \right\|_{\mathcal{L}_{al, (p_1,\dots, p_m)}^{mult}} \underset{k, l}{\longrightarrow} 0.
	\end{equation*}
Thus, 
\begin{equation*}
	\left\|\overset{\wedge}{T}_k-\overset{\wedge}{T}_l \right\| = \left\|(T_k-T_l)^{\wedge} \right\| \underset{k, l}{\rightarrow} 0 {\bf \text{.}}
\end{equation*}
So, $(\overset{\wedge}{T}_k)_{k=1}^{\infty} \subset \mathcal{L}\left(\ell_{p_1}^u(E_1),\dots, \ell_{p_m}^u(E_m); Rad\left(F; \mathbb{N}^m\right)\right)$ is a Cauchy sequence{\bf .} Then there is $B \in \mathcal{L}\left(\ell_{p_1}^u(E_1),\dots, \ell_{p_m}^u(E_m); Rad\left(F; \mathbb{N}^m\right)\right)$ such that
\begin{equation*}
	\overset{\wedge}{T}_k \underset{k}{\longrightarrow} B.
\end{equation*}
Thus,
\begin{align*}
	&\left\|B\left(\left(x_j^{(1)}\right)_{j=1}^{\infty},\dots, \left(x_j^{(m)}\right)_{j=1}^{\infty}\right) -  \overset{\wedge}{T}_k\left(\left(x_j^{(1)}\right)_{j=1}^{\infty},\dots, \left(x_j^{(m)}\right)_{j=1}^{\infty}\right)\right\|_{Rad\left(F; \mathbb{N}^m\right)}\\
	&= \left\|B\left(\left(x_j^{(1)}\right)_{j=1}^{\infty},\dots, \left(x_j^{(m)}\right)_{j=1}^{\infty}\right) -  \left(T_k\left(x_{j_1}^{(1)},\dots, x_{j_m}^{(m)}\right)\right)_{j_1,\dots, j_m=1}^{\infty}\right\|_{Rad\left(F; \mathbb{N}^m\right)} \underset{k}{\longrightarrow} 0,
\end{align*}
for every $\left(x_j^{(i)}\right)_{j=1}^{\infty} \in \ell_{p_i}^u(E_i)$, $i=1,\dots, m$.
We call $B\left(\left(x_j^{(1)}\right)_{j=1}^{\infty},\dots, \left(x_j^{(m)}\right)_{j=1}^{\infty}\right)$ by $\left(z_{j_1,\dots, j_m}^{x_{j_1}^{(1)},\dots, x_{j_m}^{(m)}}\right)_{j_1,\dots, j_m=1}^{\infty}$. It follows from Lemma \ref{UESI} that
\begin{align*}
	\left\|z_{j_1,\dots, j_m}^{x_{j_1}^{(1)},\dots, x_{j_m}^{(m)}} - T_k\left(x_{j_1}^{(1)},\dots, x_{j_m}^{(m)}\right)\right\| \underset{k}{\longrightarrow} 0,
\end{align*}
so 
\begin{equation*}
	T_k\left(x_{j_1}^{(1)},\dots, x_{j_m}^{(m)}\right) \underset{k}{\longrightarrow} z_{j_1,\dots, j_m}^{x_{j_1}^{(1)},\dots, x_{j_m}^{(m)}}\text{, } \forall j_1,\dots, j_m \in \mathbb{N}.
\end{equation*}
On the other hand, since  $(T_k)_{k=1}^{\infty}$ is a Cauchy sequence in  $\mathcal{L}_{al, (p_1,\dots, p_m)}^{mult}(E_1,\dots, E_m; F)$, by Remark \ref{Norm} $(T_k)_{k=1}^{\infty}$ is a Cauchy sequence in  $\mathcal{L}(E_1,\dots, E_m; F)$ and there is $T \in \mathcal{L}(E_1,\dots, E_m; F)$ such that
\begin{equation*}
	T_k\left(x_{j_1}^{(1)},\dots, x_{j_m}^{(m)}\right) \underset{k}{\longrightarrow} T\left(x_{j_1}^{(1)},\dots, x_{j_m}^{(m)}\right).
\end{equation*}
Then, 
\begin{equation*}
	T\left(x_{j_1}^{(1)},\dots, x_{j_m}^{(m)}\right) = z_{j_1,\dots, j_m}^{x_{j_1}^{(1)},\dots, x_{j_m}^{(m)}},
\end{equation*}
thus
\begin{equation*}
	\left(T\left(x_{j_1}^{(1)},\dots, x_{j_m}^{(m)}\right) \right)_{j_1,\dots, j_m=1}^{\infty} = \left( z_{j_1,\dots, j_m}^{x_{j_1}^{(1)},\dots, x_{j_m}^{(m)}}\right)_{j_1,\dots, j_m=1}^{\infty}.
\end{equation*}
So, if $\left(x_j^{(i)}\right)_{j=1}^{\infty} \in \ell_{p_i}^u\left(E_i\right)$ for $i=1,\dots, m$, then 
\begin{equation*}
	\left(T\left(x_{j_1}^{(1)},\dots, x_{j_m}^{(m)}\right)\right)_{j_1,\dots, j_m=1}^{\infty} = \left( z_{j_1,\dots, j_m}^{x_{j_1}^{(1)},\dots, x_{j_m}^{(m)}}\right)_{j_1,\dots, j_m=1}^{\infty} \in Rad\left(F; \mathbb{N}^m\right) {\bf \text{.}} 
\end{equation*}
Thus, $T \in \mathcal{L}_{al, (p_1,\dots, p_m)}^{mult}(E_1,\dots, E_m; F)$.
\end{proof}

\begin{remark}\label{INC1}
	Let $m \in \mathbb{N}$ and $1 < p_1,\dots, p_m \le 2$. Then, $\ell_{p_1}^u(\mathbb{K})\cdots \ell_{p_m}^u(\mathbb{K}) \overset{multi, 1}{\hookrightarrow} Rad\left(\mathbb{K}; \mathbb{N}^m\right)$. Indeed, by Remark \ref{RCL2} item (a), for every $\left(\lambda_j^{(i)}\right)_{j=1}^{\infty} \in \ell_{p_i}^u(E_i)$, $i=1,\dots, m$
	\begin{align*}
		&\int_{[0, 1]^m}\left\|\sum_{j_1,\dots, j_m = 1}^{\infty}r_{j_1}(t_1)\cdots r_{j_m}(t_m)\lambda_{j_1}^{(1)}\cdots \lambda_{j_m}^{(m)}\right\|^2dt_1\cdots dt_m = \sum_{j_1,\dots, j_m=1}^{\infty}\left|\lambda_{j_1}^{(1)}\cdots \lambda_{j_m}^{(m)}\right|^2 =\\
		&= \prod_{i=1}^{m}\left(\sum_{j=1}^{\infty}\left|\lambda_{j}^{(i)} \right|^2\right)
		\le \prod_{i=1}^{m}\left( \sum_{j = 1}^{\infty}\left|\lambda_{j
		}^{(i)} \right|^{p_i}\right).
	\end{align*}
	So, 
	\begin{equation*}
		\left(\lambda_{j_1}^{(1)}\cdots \lambda_{j_m}^{(m)}\right)_{j_1,\dots, j_m=1}^{\infty} \in Rad\left(\mathbb{K}; \mathbb{N}^m\right)
	\end{equation*}
and
\begin{equation*}
	\left\|\left(\lambda_{j_1}^{(1)}\cdots \lambda_{j_m}^{(m)}\right)_{j_1,\dots, j_m=1}^{\infty} \right\|_{Rad\left(\mathbb{K}; \mathbb{N}^m\right)} \le \prod_{i=1}^{m}\left\|\left(\lambda_j^{(i)}\right) \right\|_{p_i} = \prod_{i=1}^{m}\left\|\left(\lambda_j^{(i)}\right) \right\|_{w, p_i}.
\end{equation*}
\end{remark}


A consequence of Remark \ref{INC1} is the following result.

\begin{proposition}\label{TFID1}
	\begin{description}
		\item[(a)] 	Let $1 < p_1,\dots, p_m \le 2$ and $E_1,\dots, E_m, F$ be Banach spaces. Then, the $m$-linear operators of finite type are  contained in $\mathcal{L}_{al, (p_1,\dots, p_m)}^{mult}(E_1,\dots, E_m; F)$.
		\item[(b)] 	Let $1 < p_1,\dots, p_m \le 2$ and $T\colon \mathbb{K}^m \rightarrow \mathbb{K}$ be given by $T(x_1,\dots, x_m) = x_1\cdots x_m$, then $\left\|T \right\|_{\mathcal{L}_{al, (p_1,\dots, p_m)}^{mult}} = 1$. 
	\end{description}

\end{proposition}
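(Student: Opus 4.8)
The plan is to handle the two parts separately, reducing (a) to the scalar inequality of Remark \ref{INC1} combined with linear stability, and deriving (b) from the same inequality together with Remark \ref{Norm}.

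For part (a) I would first recall that every finite-type $m$-linear operator is a finite linear combination of elementary operators $(x_1,\dots,x_m)\mapsto\varphi_1(x_1)\cdots\varphi_m(x_m)\,b$ with $\varphi_i\in E_i'$ and $b\in F$. Since $\mathcal{L}_{al,(p_1,\dots,p_m)}^{mult}(E_1,\dots,E_m;F)$ is a linear subspace of $\mathcal{L}(E_1,\dots,E_m;F)$ (it is a Banach space by Theorem \ref{COMPL}), it suffices to check that each such elementary operator $T$ lies in the class. Given $(x_j^{(i)})_{j=1}^\infty\in\ell_{p_i}^u(E_i)$, I would set $\lambda_j^{(i)}:=\varphi_i(x_j^{(i)})$; the estimate $\|(\varphi_i(x_j^{(i)}))_{j}\|_{p_i}\le\|\varphi_i\|\,\|(x_j^{(i)})_j\|_{w,p_i}$ shows that $(\lambda_j^{(i)})_j\in\ell_{p_i}(\mathbb{K})=\ell_{p_i}^u(\mathbb{K})$. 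Remark \ref{INC1} then puts the scalar $m$-sequence $(\lambda_{j_1}^{(1)}\cdots\lambda_{j_m}^{(m)})_{j_1,\dots,j_m}$ in $Rad(\mathbb{K};\mathbb{N}^m)$, and applying the bounded linear map $v\colon\mathbb{K}\to F$, $v(t)=tb$, the linear stability of $Rad(\cdot;\mathbb{N}^m)$ established above yields $(\lambda_{j_1}^{(1)}\cdots\lambda_{j_m}^{(m)}b)_{j_1,\dots,j_m}\in Rad(F;\mathbb{N}^m)$. As this $m$-sequence equals $(T(x_{j_1}^{(1)},\dots,x_{j_m}^{(m)}))_{j_1,\dots,j_m}$, the operator $T$ is multiple almost $(p_1,\dots,p_m)$-summing, and by linearity so is every finite-type operator.

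For part (b) note that $T=I_m$ is itself of finite type, hence in the class by (a), so $\|T\|_{\mathcal{L}_{al,(p_1,\dots,p_m)}^{mult}}=\|\widehat{T}\|$ is well defined. For scalar sequences $\ell_{p_i}^u(\mathbb{K})=\ell_{p_i}(\mathbb{K})$ isometrically, and $\widehat{T}((\lambda^{(1)}),\dots,(\lambda^{(m)}))=(\lambda_{j_1}^{(1)}\cdots\lambda_{j_m}^{(m)})_{j_1,\dots,j_m}$. Remark \ref{INC1} bounds the $Rad(\mathbb{K};\mathbb{N}^m)$-norm of this product sequence by $\prod_{i=1}^m\|(\lambda^{(i)})\|_{p_i}$, giving $\|\widehat{T}\|\le1$. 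For the reverse inequality I would use Remark \ref{Norm}, namely $\|T\|\le\|T\|_{\mathcal{L}_{al,(p_1,\dots,p_m)}^{mult}}$, together with the elementary fact that the usual multilinear norm of $(x_1,\dots,x_m)\mapsto x_1\cdots x_m$ equals $1$; this forces $\|T\|_{\mathcal{L}_{al,(p_1,\dots,p_m)}^{mult}}\ge1$. Equivalently, the lower bound follows by evaluating $\widehat{T}$ at the first unit vectors $e_1\in\ell_{p_i}(\mathbb{K})$, where the product sequence is $e_{1,\dots,1}$, of $Rad(\mathbb{K};\mathbb{N}^m)$-norm $1$. Combining the two estimates gives $\|T\|_{\mathcal{L}_{al,(p_1,\dots,p_m)}^{mult}}=1$.

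I anticipate no real difficulty, since both parts are direct consequences of Remark \ref{INC1}. The points deserving a line of care are the identification $\ell_{p_i}^u(\mathbb{K})=\ell_{p_i}(\mathbb{K})$ with equal norms used in (b), the scalar estimate guaranteeing $(\varphi_i(x_j^{(i)}))_j\in\ell_{p_i}(\mathbb{K})$ in (a), and the appeal to linear stability to transfer the scalar $m$-sequence to the $F$-valued one. The structural fact underpinning the whole argument for (a) is that $\mathcal{L}_{al,(p_1,\dots,p_m)}^{mult}$ is a vector space, which is precisely what permits the reduction to elementary operators.
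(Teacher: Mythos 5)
Your proof is correct and takes essentially the same approach as the paper, which states this proposition without further proof, presenting it simply as ``a consequence of Remark \ref{INC1}''. Your writeup supplies exactly the details implicit in that claim: reduction of finite-type operators to elementary tensors plus linearity of the class, the scalar estimate $\|(\varphi_i(x_j^{(i)}))_j\|_{p_i}\le\|\varphi_i\|\,\|(x_j^{(i)})_j\|_{w,p_i}$, linear stability of $Rad(\cdot;\mathbb{N}^m)$ to pass to $F$-valued sequences, and the lower bound $\|\widehat{T}\|\ge 1$ via evaluation at unit vectors (or Remark \ref{Norm}).
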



Using the results above, we can prove the main result of this section.

\begin{theorem}
	Let $1 < p_1,\dots, p_m \le 2$. Then the class $\left(\mathcal{L}_{al, (p_1,\dots, p_m)}^{mult}, \|\cdot \|_{\mathcal{L}_{al, (p_1,\dots, p_m)}^{mult}}\right)$ is a Banach multi-ideal between Banach spaces.
\end{theorem}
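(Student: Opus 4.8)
The plan is to verify each clause in the definition of a Banach multi-ideal, observing that nearly all of the work is already available. Completeness of every component $\mathcal{L}_{al,(p_1,\dots,p_m)}^{mult}(E_1,\dots,E_m;F)$ under $\|\cdot\|_{\mathcal{L}_{al,(p_1,\dots,p_m)}^{mult}}$ is exactly Theorem \ref{COMPL}. That $\|\cdot\|_{\mathcal{L}_{al,(p_1,\dots,p_m)}^{mult}}$ is genuinely a norm, and not merely a seminorm, follows because $\|T\|_{\mathcal{L}_{al,(p_1,\dots,p_m)}^{mult}}=0$ forces $\|T\|=0$ by Remark \ref{Norm}, hence $T=0$. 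The containment of the finite-type $m$-linear operators is Proposition \ref{TFID1}(a), and the normalization $\|I_m\|_{\mathcal{L}_{al,(p_1,\dots,p_m)}^{mult}}=1$ is Proposition \ref{TFID1}(b), since $I_m$ is precisely the operator $(x_1,\dots,x_m)\mapsto x_1\cdots x_m$.

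The only remaining clause is the multi-ideal property. Given $T\in\mathcal{L}_{al,(p_1,\dots,p_m)}^{mult}(E_1,\dots,E_m;F)$, $u_i\in\mathcal{L}(G_i;E_i)$ and $t\in\mathcal{L}(F;H)$, I would set $S:=t\circ T\circ(u_1,\dots,u_m)$ and first show that $S$ is multiple almost summing. Starting from arbitrary $(y_j^{(i)})_{j=1}^{\infty}\in\ell_{p_i}^u(G_i)$, the linear stability of the weak sequence classes $\ell_{p_i}^u$ gives $(u_i(y_j^{(i)}))_{j=1}^{\infty}\in\ell_{p_i}^u(E_i)$; since $T$ is multiple almost summing, the $m$-sequence $(T(u_1(y_{j_1}^{(1)}),\dots,u_m(y_{j_m}^{(m)})))_{j_1,\dots,j_m}$ lies in $Rad(F;\mathbb{N}^m)$; and by the linear stability of $Rad(\cdot;\mathbb{N}^m)$ established above, its image under $t$ remains in $Rad(H;\mathbb{N}^m)$. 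Since $t(T(u_1(y_{j_1}^{(1)}),\dots,u_m(y_{j_m}^{(m)})))=S(y_{j_1}^{(1)},\dots,y_{j_m}^{(m)})$, this says exactly that $S\in\mathcal{L}_{al,(p_1,\dots,p_m)}^{mult}(G_1,\dots,G_m;H)$.

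For the norm estimate, which is the real crux, the idea is to factor the induced operator $\hat{S}$. A direct computation on a generic tuple $((y_j^{(1)}),\dots,(y_j^{(m)}))$ shows
\begin{equation*}
	\hat{S}=\hat{t}\circ\hat{T}\circ(\hat{u}_1,\dots,\hat{u}_m),
\end{equation*}
where $\hat{u}_i\colon\ell_{p_i}^u(G_i)\to\ell_{p_i}^u(E_i)$ and $\hat{t}\colon Rad(F;\mathbb{N}^m)\to Rad(H;\mathbb{N}^m)$ are the operators induced by $u_i$ and $t$. Submultiplicativity of the operator norm then yields $\|\hat{S}\|\le\|\hat{t}\|\,\|\hat{T}\|\,\|\hat{u}_1\|\cdots\|\hat{u}_m\|$. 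Here $\|\hat{T}\|=\|T\|_{\mathcal{L}_{al,(p_1,\dots,p_m)}^{mult}}$ by definition, while linear stability supplies the key norm identities $\|\hat{t}\|=\|t\|$ (from the $Rad$ theorem) and $\|\hat{u}_i\|=\|u_i\|$ (from linear stability of $\ell_{p_i}^u$). Combining these gives $\|S\|_{\mathcal{L}_{al,(p_1,\dots,p_m)}^{mult}}\le\|t\|\,\|T\|_{\mathcal{L}_{al,(p_1,\dots,p_m)}^{mult}}\,\|u_1\|\cdots\|u_m\|$, completing the multi-ideal inequality. The one point to handle with care is justifying the factorization identity for $\hat{S}$ coordinate-by-coordinate and confirming that each induced map is well-defined on the relevant sequence space; both reduce to the linear stability results already at our disposal, so no genuinely new estimate is required.
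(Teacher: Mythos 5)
Your proposal is correct and follows essentially the same route as the paper: both reduce everything to Theorem \ref{COMPL}, Proposition \ref{TFID1}, and the multi-ideal property via linear stability of $\ell_{p_i}^u(\cdot)$ and $Rad(\cdot;\mathbb{N}^m)$. Your factorization $\hat{S}=\hat{t}\circ\hat{T}\circ(\hat{u}_1,\dots,\hat{u}_m)$ with submultiplicativity is just a cleaner packaging of the chain of inequalities the paper writes out coordinate-by-coordinate on a generic tuple.
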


\begin{proof}
	Since we already  have the  Proposition \ref{TFID1} and Theorem \ref{COMPL}, we need to show only the property of multi-ideal. Let $t \in \mathcal{L}(F, H)$, $T \in \mathcal{L}_{al, (p_1,\dots, p_m)}^{mult}(E_1,\dots, E_m; F)$ and $u_i \in \mathcal{L}(G_i; E_i)$, $i=1,\dots, m$. Then, 
	\begin{align*}
		&\int_{[0, 1]^m}\left\|\sum_{j_1,\dots, j_m = 1}^{\infty}r_{j_1}(t_1)\cdots r_{j_m}(t_m) t \circ T \circ (u_1,\dots, u_m)\left(x_{j_1}^{(1)},\dots, x_{j_m}^{(m)}\right) \right\|^2dt_1\cdots dt_m\\
		&= \lim_{n_1,\dots, n_m\rightarrow \infty} \int_{[0, 1]^m}\left\|\sum_{j_1,\dots, j_m = 1}^{n_1,\dots, n_m}r_{j_1}(t_1)\cdots r_{j_m}(t_m) t \circ T \circ (u_1,\dots, u_m)\left(x_{j_1}^{(1)},\dots, x_{j_m}^{(m)}\right) \right\|^2dt_1\cdots dt_m\\
		&= \lim_{n_1,\dots, n_m\rightarrow \infty} \int_{[0, 1]^m}\left\|t\left(\sum_{j_1,\dots, j_m = 1}^{n_1,\dots, n_m}r_{j_1}(t_1)\cdots r_{j_m}(t_m) T \circ (u_1,\dots, u_m)\left(x_{j_1}^{(1)},\dots, x_{j_m}^{(m)}\right)\right) \right\|^2dt_1\cdots dt_m\\
		&\le \|t\|^2 \lim_{n_1,\dots, n_m\rightarrow \infty} \int_{[0, 1]^m}\left\|\sum_{j_1,\dots, j_m = 1}^{n_1,\dots, n_m}r_{j_1}(t_1)\cdots r_{j_m}(t_m) T\left(u_1\left(x_{j_1}^{(1)}\right),\dots, u_m\left(x_{j_m}^{(m)}\right)\right) \right\|^2dt_1\cdots dt_m.
	\end{align*}
	As $Rad\left(\cdot; \mathbb{N}^m\right)$ and $\ell_{p_i}^{(i)}(\cdot)$ for $i=1,\dots, m$ are linearly stable, it follows that $t \circ T \circ (u_1,\dots, u_m) \in \mathcal{L}_{al, (p_1,\dots, p_m)}^{mult}(G_1,\dots, G_m; H)$ and,
	\begin{align*}
		&\left\|\left(t \circ T \circ (u_1,\dots, u_m)\right)^{\wedge}\left(\left(x_j^{(1)}\right)_{j=1}^{\infty},\dots, \left(x_j^{(m)}\right)_{j=1}^{\infty}\right) \right\|_{Rad\left(H; \mathbb{N}^m\right)}\\
		&\le \|t\| \left\|\left(T\left(u_1\left(x_{j_1}^{(1)}\right),\dots, u_m\left(x_{j_m}^{(m)}\right)\right)\right)_{j_1,\dots, j_m=1}^{\infty} \right\|_{Rad\left(F; \mathbb{N}^m\right)}\\
		&= \|t\| \left\|\overset{\wedge}{T}\left(\left(u_1\left(x_j^{(1)}\right)\right)_{j=1}^{\infty},\dots, \left(u_1\left(x_j^{(m)}\right)\right)_{j=1}^{\infty} \right) \right\|\\
		&\le \|t\| \left\|\overset{\wedge}{T} \right\| \|u_1\|\cdots \|u_m\| \prod_{i=1}^{m}\left\|\left(x_j^{(i)}\right)_{j=1}^{\infty} \right\|_{w, p_i}\\
		&= \|t\| \left\|T \right\|_{\mathcal{L}_{al, (p_1,\dots, p_m)}^{mult}}\|u_1\|\cdots \|u_m\|\prod_{i=1}^{m}\left\|\left(x_j^{(i)}\right)_{j=1}^{\infty} \right\|_{w, p_i}.
	\end{align*}
	Therefore,
	\begin{align*}
	\left\|t \circ T \circ (u_1,\dots, u_m) \right\|_{\mathcal{L}_{al, (p_1,\dots, p_m)}^{mult}} = \left\|\left(t \circ T \circ (u_1,\dots, u_m)\right)^{\wedge} \right\| \le  \|t\| \left\|T \right\|_{\mathcal{L}_{al, (p_1,\dots, p_m)}^{mult}} \|u_1\|\cdots \|u_m\|.
	\end{align*}
\end{proof}

\section{Coherence and compatibility}

The concept of Coherence and compatibility that we use in this section was intrduced by Pelegrino and Ribeiro in \cite{PR14}. In this concept a pair of multi-ideals and homogeneous polynomial  ideals is used, so it is necessary to define a class of homogeneous  polynomials associated  to $\mathcal{L}_{al, (p_1,\dots, p_m)}^{mult}$. Let's remember these concepts.

\begin{definition}[Compatible pair of ideals]
	Let $\mathcal{U}$ be a normed operator ideal and $N \in \left(\mathbb{N} - \{1\}  \right)\cup \{\infty \}$. A sequence $\left(\mathcal{U}_n, \mathcal{M}_n  \right)_{n=1}^N$, with $\mathcal{U}_1 = \mathcal{M}_1 = \mathcal{U}$, is compatible with $\mathcal{U}$  if there exist positive constants $\alpha_1, \alpha_2, \alpha_3$ such that for all Banach spaces $E$ and $F$, the following conditions hold for all $n \in \{2,\cdots, N\}:$
	
	\begin{description}
		\item $(CP1)$  If $k \in \{1,\dots, n\}$, $T \in \mathcal{M}_n(E_1,\dots, E_n;F)$ and $a_j \in E_j$ for all $j \in \{1,\dots, n\}\setminus\{k\}$, then $ T_{a_1,\dots, a_{k-1},a_{k+1},\dots, a_n} \in \mathcal{U}(E_k; F)$
		and
		\begin{equation*}
			\left\Vert T_{a_1,\dots, a_{k-1},a_{k+1},\dots, a_n} \right\Vert \le \alpha_1 \left\Vert T\right\Vert_{\mathcal{M}_n}\|a_1\|\cdots \|a_{k-1}\| \ \|a_{k+1}\|\cdots \|a_n\|.
		\end{equation*}
		
		\item $(CP2)$ If $P \in \mathcal{U}_n(^nE; F)$ and $a \in E$, then $P_{a^{n-1}} \in \mathcal{U}(E; F)$ and
		\begin{equation*}
			\left\Vert P_{a^{n-1}}\right\Vert_{\mathcal{U}} \le \alpha_2 \max{\left\{\left\Vert\overset{\vee}{P}\right\Vert_{\mathcal{M}_n}, \left\Vert P \right\Vert_{\mathcal{U}_n} \right\}}\Vert a\Vert^{n-1}.
		\end{equation*}
		
		\item (CP3) If $u \in \mathcal{U}(E_n; F)$, $\gamma_j \in E'_j$ for all $j = 1,\dots, n-1$, then $\gamma_1 \cdots \gamma_{n-1}u \in \mathcal{M}_n(E_1,\dots, E_n; F)$
		and 
		\begin{equation*}
			\left\Vert\gamma_1 \cdots\gamma_{n-1}u  \right\Vert_{\mathcal{M}_n} \le \alpha_3 \Vert\gamma_1\Vert\cdots\Vert\gamma_{n-1}\Vert\left\Vert u  \right\Vert_{\mathcal{U}}.
		\end{equation*}
		
		\item $(CP4)$ If $u \in \mathcal{U}(E; F)$ and $\gamma \in E'$, then $\gamma^{(n-1)}u \in \mathcal{U}_n(^{n}E; F)$.
		
		\item $(CP5)$ $P$ belongs to $\mathcal{U}_n(^nE; F)$ if and only if  $\overset{\vee}{P}$ belongs to $\mathcal{M}_n(^nE; F)$.
	\end{description}
	
\end{definition}

\begin{definition}[Coherent pair of ideals]\label{D2.2.}
	Let $\mathcal{U}$ be a normed operator ideal and let $N \in \mathbb{N} \cup \{\infty \}$. A sequence $\left(\mathcal{U}_k, \mathcal{M}_k  \right)_{k=1}^{N}$, with $\mathcal{U}_1 = \mathcal{M}_1 = \mathcal{U}$, is coherent if there exist positive constants $\beta_1, \beta_2, \beta_3$ such that for all Banach spaces $E$ and $F$ the following conditions hold for $k = 1,\dots,N-1:$
	
	\begin{description}
		\item $(CH1)$ If $T \in \mathcal{M}_{k + 1}\left(E_1,\dots, E_{k+1}; F \right)$ and $a_j \in E_j$ for $j=1,\dots, k+1$, then
		\begin{equation*}
			T_{a_j} \in \mathcal{M}_k\left(E_1,\dots, E_{j-1}, E_{j+1},\dots, E_{k+1}; F  \right)
		\end{equation*}
		and
		\begin{equation*}
			\left\|T_{a_j} \right\|_{\mathcal{M}_k} \le \beta_1 \left\| T \right\|_{\mathcal{M}_{k + 1}}\|a_j\|.
		\end{equation*}
		
		\item $(CH2)$ If $P \in \mathcal{U}_{k+1}\left(^{k+1}E; F \right)$, $a \in E$, then $P_a$ belongs to $\mathcal{U}_k\left(^kE; F \right)$ and
		\begin{equation*}
			\left\| P_a \right\|_{\mathcal{U}_k} \le \beta_2 \max{\left\{\left\| \overset{\vee}{P} \right\|_{\mathcal{M}_{k+1}}, \left\| P \right\|_{\mathcal{U}_{k+1}} \right\}}\Vert a \Vert.
		\end{equation*}
		
		\item $(CH3)$ If $T \in \mathcal{M}_k(E_1,\dots,E_k; F)$, $\gamma \in E'_{k+1}$, then
		\begin{equation*}
			\gamma T \in \mathcal{M}_{k + 1}(E_1,\dots,E_{k + 1}; F)
		\end{equation*}
		and 
		\begin{equation*}
			\left\|\gamma T\right\|_{\mathcal{M}_{k + 1}} \le \beta_3\Vert\gamma\Vert \left\|T \right\|_{ \mathcal{M}_{k}}.
		\end{equation*}
		
		\item $(CH4)$ If $P \in \mathcal{U}_{k}\left(^kE; F \right)$ and $\gamma \in E'$, then $\gamma P \in \mathcal{U}_{k+1}\left(^{k + 1}E; F \right).$
		
		\item $(CH5)$ For all $k=1,\dots,N$, $P$ belongs to $\mathcal{U}_k(^kE; F)$ if and only if $\overset{\vee}{P}$ belongs to $\mathcal{M}_k(^kE; F)$.
	\end{description}
	
\end{definition}

In this section, the class of all multiple almost $p$-summing $m$-linear operators is denoted by $\mathcal{L}_{al, p}^{multi, m}$. The reason for this is to highlight the linearity of the operators. The next result is folklore and introduces  the class of homogeneous polynomials that will be considered in this section. 

\begin{proposition}\label{IHPG}
	Let $\left(\mathcal{M}, \|\cdot\|_{\mathcal{M}}\right)$ be a Banach multi-ideal. Then,	
	\begin{equation*}
		\mathcal{P}_{\mathcal{M}}:= \{P \in \mathcal{P}\text{; } \check{P} \in \mathcal{M}\}\text{, } \|P\|_{\mathcal{P}_{\mathcal{M}}} := \|\check{P}\|_{\mathcal{M}}
	\end{equation*}
is a Banach ideal of homogeneous polynomials.
\end{proposition}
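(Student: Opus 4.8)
The plan is to transport the multi-ideal structure of $\mathcal{M}$ through the polarization correspondence $P \mapsto \check{P}$, which is a linear bijection from the space $\mathcal{P}(^mE;F)$ of continuous $m$-homogeneous polynomials onto the subspace of symmetric operators in $\mathcal{L}_m(^mE;F)$. By the very definition $\|P\|_{\mathcal{P}_\mathcal{M}} := \|\check{P}\|_\mathcal{M}$, this correspondence is an isometry from $\mathcal{P}_\mathcal{M}(^mE;F)$ onto the symmetric members of $\mathcal{M}(^mE;F)$, so that each requirement in the (analogous) definition of a Banach ideal of homogeneous polynomials can be read off from the corresponding requirement for the Banach multi-ideal $\mathcal{M}$.

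First I would verify that $\mathcal{P}_\mathcal{M}(^mE;F)$ is a linear subspace of $\mathcal{P}(^mE;F)$ and that $\|\cdot\|_{\mathcal{P}_\mathcal{M}}$ is a norm; both follow from the linearity of $P \mapsto \check{P}$ (so that $(P+Q)^{\vee} = \check P + \check Q \in \mathcal{M}$ and $(\lambda P)^{\vee} = \lambda \check P$) together with its injectivity, which gives definiteness of the norm. For the ideal axioms I would use three identities. A finite-type polynomial $P = \sum_k \varphi_k^m\, y_k$ (with $\varphi_k \in E'$, $y_k \in F$) satisfies $\check P = \sum_k \varphi_k \otimes \cdots \otimes \varphi_k \otimes y_k$, a finite-type $m$-linear operator and hence an element of $\mathcal{M}$ by the first multi-ideal axiom, which yields containment of the finite-type polynomials. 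The polynomial $\lambda \mapsto \lambda^m$ has associated symmetric operator $I_m$, so its $\mathcal{P}_\mathcal{M}$-norm equals $\|I_m\|_\mathcal{M} = 1$. Finally, for $P \in \mathcal{P}_\mathcal{M}(^mE;F)$, $u \in \mathcal{L}(G;E)$ and $t \in \mathcal{L}(F;H)$, the crucial identity is $(t \circ P \circ u)^{\vee} = t \circ \check P \circ (u,\dots,u)$; this operator is symmetric and, by the multi-ideal property of $\mathcal{M}$, belongs to $\mathcal{M}$, so $t \circ P \circ u \in \mathcal{P}_\mathcal{M}(^mG;H)$, and the same property gives $\|t \circ P \circ u\|_{\mathcal{P}_\mathcal{M}} = \|t \circ \check P \circ (u,\dots,u)\|_\mathcal{M} \le \|t\|\,\|\check P\|_\mathcal{M}\,\|u\|^m = \|t\|\,\|P\|_{\mathcal{P}_\mathcal{M}}\,\|u\|^m$.

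The main obstacle is completeness of $\left(\mathcal{P}_\mathcal{M}(^mE;F),\|\cdot\|_{\mathcal{P}_\mathcal{M}}\right)$, and here the isometry reduces the problem to showing that the symmetric operators form a closed subspace of the Banach space $\left(\mathcal{M}(^mE;F),\|\cdot\|_\mathcal{M}\right)$. The one genuinely technical point I would establish first is the domination $\|T\| \le \|T\|_\mathcal{M}$ for $T \in \mathcal{M}(^mE;F)$: applying the multi-ideal property to the norm-one maps $u_i \colon \mathbb{K} \to E_i$, $\lambda \mapsto \lambda a_i$ (for $a_i \in B_{E_i}$) and to $\psi \in B_{F'}$ gives $\psi \circ T \circ (u_1,\dots,u_m) = \psi\!\left(T(a_1,\dots,a_m)\right) I_m$, whose $\mathcal{M}$-norm is $|\psi(T(a_1,\dots,a_m))|\,\|I_m\|_\mathcal{M} = |\psi(T(a_1,\dots,a_m))|$ and is bounded by $\|T\|_\mathcal{M}$; taking suprema over $\psi$ and the $a_i$ yields the claim. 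Consequently $\|\cdot\|_\mathcal{M}$-convergence forces convergence in the usual sup-norm, hence pointwise convergence, and since symmetry is preserved under pointwise limits, the symmetric operators are closed and therefore complete in $\mathcal{M}(^mE;F)$. Transporting this back through the isometry $P \mapsto \check P$ gives the completeness of $\mathcal{P}_\mathcal{M}(^mE;F)$ and completes the verification that $\left(\mathcal{P}_\mathcal{M}, \|\cdot\|_{\mathcal{P}_\mathcal{M}}\right)$ is a Banach ideal of homogeneous polynomials.
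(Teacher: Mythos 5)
Your proposal is correct, but there is no proof in the paper to compare it with: the authors state Proposition \ref{IHPG} without proof, calling it folklore and deferring to the references \cite{BJ, B05, FG03}. Your argument is the standard one from that literature and, as written, it is complete. The isometric identification of $\mathcal{P}_{\mathcal{M}}(^mE;F)$ with the symmetric members of $\mathcal{M}(^mE;F)$ via $P \mapsto \check{P}$ is legitimate (surjectivity onto the symmetric part follows from uniqueness of the symmetric associate), the three algebraic identities you invoke are the right ones --- $\check{P}$ of a finite-type polynomial is a finite-type operator, $(\lambda \mapsto \lambda^m)^{\vee} = I_m$, and $(t\circ P\circ u)^{\vee} = t\circ\check{P}\circ(u,\dots,u)$, the last justified by symmetry plus agreement on the diagonal --- and the only genuinely delicate point, completeness, is handled properly. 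Your derivation of the domination $\|T\| \le \|T\|_{\mathcal{M}}$ from the multi-ideal axioms (composing with the rank-one maps $\lambda \mapsto \lambda a_i$ and a functional $\psi \in B_{F'}$ to produce the scalar multiple $\psi\left(T(a_1,\dots,a_m)\right)I_m$, then using $\|I_m\|_{\mathcal{M}}=1$) is exactly the device needed: it makes $\|\cdot\|_{\mathcal{M}}$-convergence imply pointwise convergence, so the symmetric operators form a closed, hence complete, subspace of the Banach space $\mathcal{M}(^mE;F)$, and completeness transports back through the isometry. It is worth noting that this domination is the abstract analogue of what the paper records, for its specific class, in Remark \ref{Norm}; making it explicit in the general setting is precisely what turns the folklore statement into a proof.
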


The class $(\mathcal{P}_{\mathcal{M}}, \|\cdot\|_{\mathcal{P}_{\mathcal{M}}})$ is called the Banach ideal of homogeneous polynomials generated by $\left(\mathcal{M}, \|\cdot\|_{\mathcal{M}}\right)$. This class has
been studied extensively in several works, of which we highlight \cite{BJ, B05, FG03}.

\begin{definition}
	Let $1 < p \le 2$. The class of all multiple almost $p$-summing $m$-homogeneous polynomials and its norm are defined by
	\begin{equation*}
	\mathcal{P}_{al, p}^{multi, m} :=	\mathcal{P}_{\mathcal{L}_{al, p}^{multi, m}} = \left\{P \in \mathcal{P}\text{; } \check{P} \in \mathcal{L}_{al, p}^{multi, m} \right\} { and }\ \|P\|_{\mathcal{P}_{al, p}^{multi, m}} := \|\check{P}\|_{\mathcal{L}_{al, p}^{multi, m}}.
	\end{equation*}
\end{definition}

As $\left(\mathcal{L}_{al, p}^{multi, m}, \|\cdot\|_{\mathcal{L}_{al, p}^{multi, m}}\right)$ is a Banach Multi-ideal, {\bf it} follows from Proposition \ref{IHPG} that
\begin{equation*}
	\left(\mathcal{P}_{al, p}^{multi, m}, \|\cdot\|_{\mathcal{P}_{al, p}^{multi, m}} \right)
\end{equation*}
is a Banach ideal of homogeneous polynomials.

An immediate consequence of the Polarization Formula is the following:

\begin{lemma}\label{PMD}
	For every $P \in \mathcal{P}(E; F)$ and $a \in E${\bf, }  $(P_a)^{\vee} = \check{P}_{a}$.
\end{lemma}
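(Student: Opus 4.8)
The plan is to unwind the definitions and then appeal to the uniqueness of the symmetric multilinear operator associated with a homogeneous polynomial, which is exactly the information the Polarization Formula provides. Recall that for an $m$-homogeneous polynomial $P \in \mathcal{P}(^mE;F)$ with associated symmetric $m$-linear operator $\check{P}$, the polynomial $P_a \in \mathcal{P}(^{m-1}E;F)$ is the one given by $P_a(x) = \check{P}\left(a,x,\overset{m-1}{\dots},x\right)$ for $x \in E$.

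First I would introduce the candidate $(m-1)$-linear operator $\check{P}_a$ defined by $\check{P}_a(x_1,\dots,x_{m-1}) = \check{P}(a,x_1,\dots,x_{m-1})$ and verify that it is symmetric; this is immediate, since $\check{P}$ is symmetric in all $m$ of its variables, so fixing its first slot equal to $a$ leaves an operator that is symmetric in the remaining $m-1$ variables. Next I would restrict $\check{P}_a$ to the diagonal, obtaining $\check{P}_a\left(x,\overset{m-1}{\dots},x\right) = \check{P}\left(a,x,\overset{m-1}{\dots},x\right) = P_a(x)$ for every $x \in E$. Hence $\check{P}_a$ is a symmetric $(m-1)$-linear operator whose associated $(m-1)$-homogeneous polynomial is precisely $P_a$.

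Finally, the Polarization Formula guarantees that every homogeneous polynomial is generated by a unique symmetric multilinear operator; applying this uniqueness to $P_a$ forces $(P_a)^{\vee} = \check{P}_a$, which is the assertion. There is essentially no real obstacle here: the only points requiring care are matching the convention used for $P_a$ and confirming that the symmetry of $\check{P}_a$ is inherited from that of $\check{P}$. Once these are settled, the uniqueness clause of the Polarization Formula yields the identity immediately.
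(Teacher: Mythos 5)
Your proof is correct and is exactly the argument the paper has in mind: the paper offers no written proof, merely labeling the lemma ``an immediate consequence of the Polarization Formula,'' and your unwinding --- fixing $a$ in $\check{P}$ yields a symmetric $(m-1)$-linear operator whose diagonal restriction is $P_a$, so uniqueness of the associated symmetric operator forces $(P_a)^{\vee} = \check{P}_a$ --- is precisely that consequence spelled out.
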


Now, we can prove the main result in this section.

\begin{theorem}\label{CCMAS}
	Let $1<p\le 2$. Then, the sequence of pairs
	\begin{equation*}
		\left(\left(\mathcal{L}_{al, p}^{multi, m}, \|\cdot\|_{\mathcal{L}_{al, p}^{multi, m}}\right), \left(\mathcal{P}_{al, p}^{multi, m}, \|\cdot\|_{\mathcal{P}_{al, p}^{multi, m}} \right) \right)
	\end{equation*}
is coherent and compatible with $\prod_{al, p}$.
\end{theorem}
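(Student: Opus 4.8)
The plan is to verify directly the ten conditions $(CP1)$--$(CP5)$ and $(CH1)$--$(CH5)$, with $\mathcal{M}_n=\mathcal{L}_{al,p}^{multi,n}$, $\mathcal{U}_n=\mathcal{P}_{al,p}^{multi,n}$ and $\mathcal{U}=\prod_{al,p}$. Conditions $(CP5)$ and $(CH5)$ hold by definition, since $\mathcal{P}_{al,p}^{multi,k}=\mathcal{P}_{\mathcal{L}_{al,p}^{multi,k}}$ means exactly that $P\in\mathcal{U}_k$ iff $\check{P}\in\mathcal{M}_k$, with equal norms. The remaining eight conditions split into a ``multiplication by functionals'' group and a ``freezing variables'' group.

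For the first group, $(CP3)$ is precisely Example \ref{EX1}, which already yields both the membership $\gamma_1\otimes\cdots\otimes\gamma_{n-1}\otimes u\in\mathcal{M}_n$ and the norm estimate, so one may take $\alpha_3=1$. For $(CH3)$ I would compute the defining Rademacher integral of $\gamma T$ directly on finite truncations and pass to the limit via Remark \ref{RCL2}(b): since the Rademacher functions in distinct variables are independent, the integral over $[0,1]^{k+1}$ factors as the product of $\int_0^1|\sum_j r_j(t)\gamma(x_j^{(k+1)})|^2\,dt=\|(\gamma(x_j^{(k+1)}))\|_{\ell_2}^2$ and $\|(T(x_{j_1}^{(1)},\dots,x_{j_k}^{(k)}))\|_{Rad(F;\mathbb{N}^k)}^2$. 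Because $p\le 2$ gives $(\gamma(x_j^{(k+1)}))_j\in\ell_2$ with $\ell_2$-norm at most $\|\gamma\|\,\|(x_j^{(k+1)})\|_{w,p}$ (using $Rad(\mathbb{K};\mathbb{N})=\ell_2$ from Remark \ref{RCL2}(a)), this places $\gamma T$ in $\mathcal{M}_{k+1}$ with $\|\gamma T\|_{\mathcal{M}_{k+1}}\le\|\gamma\|\,\|T\|_{\mathcal{M}_k}$, i.e. $\beta_3=1$. The polynomial analogues $(CP4)$ and $(CH4)$ then follow by symmetrization: each symmetric form $(\gamma^{(n-1)}u)^{\vee}$ (resp. $(\gamma P)^{\vee}$) is an average over permutations of terms each lying in $\mathcal{M}_n$ (resp. $\mathcal{M}_{k+1}$) by the slot-symmetric version of Example \ref{EX1} (resp. of $(CH3)$)---the computations there are symmetric in the indices---and, $\mathcal{M}_n$ being a subspace, the average remains in it.

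For the freezing group, the device for $(CP1)$ and $(CH1)$ is to feed into each frozen slot $i$ the sequence $(a_i,0,0,\dots)\in\ell_p^u(E_i)$, whose weak-$p$ norm equals $\|a_i\|$. Applying the multiple almost $p$-summing hypothesis produces an image $m$-sequence in $Rad(F;\mathbb{N}^n)$ supported on the index set where every frozen coordinate equals $1$. On such sequences the Rademacher integral collapses, each frozen variable contributing $\int_0^1 r_1(t)^2\,dt=1$, so the $Rad(F;\mathbb{N}^n)$-norm equals the $Rad(F;\mathbb{N}^r)$-norm of the free image under $T_{a_j}$ (resp. $T_{a_1,\dots,a_{k-1},a_{k+1},\dots,a_n}$), where $r$ is the number of free slots; for $(CH1)$ this is $Rad(F;\mathbb{N}^k)$, and for the fully-frozen case $(CP1)$ it is $Rad(F;\mathbb{N})=Rad(F)$, matching $\mathcal{U}=\prod_{al,p}$. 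Taking the supremum over free sequences of unit weak-$p$ norm gives membership in the appropriate class together with the bounds, so $\alpha_1=\beta_1=1$. Finally $(CP2)$ and $(CH2)$ reduce to these: by Lemma \ref{PMD}, $(P_a)^{\vee}=\check{P}_a$, and since $\check{P}\in\mathcal{M}_{k+1}$ (as $P\in\mathcal{U}_{k+1}$), applying $(CH1)$ (resp. $(CP1)$) to $\check{P}$ puts $\check{P}_a=(P_a)^{\vee}$ in $\mathcal{M}_k$ (resp. $\mathcal{U}$), whence $P_a\in\mathcal{U}_k$; the equality $\|\check{P}\|_{\mathcal{M}_{k+1}}=\|P\|_{\mathcal{U}_{k+1}}$ makes the maxima in $(CP2)$ and $(CH2)$ harmless.

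I expect the main obstacle to be the collapse and factorization steps carried out directly on the integral definition of the $Rad$-norm. As emphasized after the introduction of $Rad(\cdot;\mathbb{N}^m)$, this class is not finitely determined, so one cannot truncate to finite sequences, bound by inequalities, and pass to a supremum in the usual way; instead every estimate must be obtained on finite partial sums and then transported to the infinite sum through the $L_2$-convergence recorded in Remark \ref{RCL2}(b). Ensuring that freezing a coordinate at the first index genuinely reduces the dimension of the Rademacher integral---and that all the coherence and compatibility constants come out equal to $1$---is where the care is concentrated; the functional-multiplication conditions, by contrast, are essentially the factorization already visible in Example \ref{EX1}.
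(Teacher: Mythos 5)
Your proof is correct, and for the coherence half it is essentially the paper's own argument: (CH1) by feeding $(a,0,0,\dots)$ into the frozen slot so that the $Rad(F;\mathbb{N}^m)$-integral collapses to a lower-dimensional one, (CH2) via Lemma \ref{PMD} and (CH1), (CH3) by factoring the Rademacher integral into a scalar $\ell_2$-factor times the $Rad(F;\mathbb{N}^m)$-norm of the image of $T$ (using $p\le 2$ to pass to the weak-$p$ norm), (CH4) by symmetrization over permutations inside the vector space $\mathcal{L}_{al,p}^{multi,m+1}$, and (CH5) by definition of $\mathcal{P}_{al,p}^{multi,m}$. Where you genuinely diverge is the compatibility half: the paper never checks (CP1)--(CP5) at all. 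It verifies only (CH1)--(CH5), notes that the constants come out as $\beta_1=\beta_2=\beta_3=1$, and then invokes \cite[Remark 3.3]{PR14}, which grants that a sequence coherent with constants equal to $1$ is automatically compatible with the limit ideal $\prod_{al,p}$. You instead verify the five compatibility conditions by hand---(CP3) from Example \ref{EX1}, (CP1) by freezing all but one slot so the image lands in $Rad(F;\mathbb{N})=Rad(F)$, (CP2) and (CP4) by the same reduction and symmetrization devices---which is sound and uses exactly the tools already in the paper. The trade-off is clear: your route is self-contained and makes explicit that all ten constants equal $1$, at the cost of repeating the freezing and factorization computations in their compatibility versions; the paper's route is shorter but rests on the external remark of Pellegrino--Ribeiro. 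Both arguments are valid and yield the same conclusion.
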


\begin{proof}
	To prove the coherence, it is necessary to check  properties  $(CH1),\dots, (CH5)$. We begin by showing property (CH1). We will do only the case $i=1$. The other cases are analogous. Let $T \in \mathcal{L}_{al, p}^{multi, m}(E_1,\dots, E_m; F)$ and $a_1 \in E_1$.
	
	Consider $(x_j^{(1)})_{j=1}^{\infty} \in \ell_p^u(E_1)$ given by $x_1^{(1)} = a_1$ and $x_j^{(1)} = 0$ for every $j\in \mathbb{N}\setminus\{1\}$. Thus,
	\begin{equation*}
		\left(T\left(x_{j_1}^{(1)},\dots, x_{j_m}^{(m)}\right)\right)_{j_1,\dots, j_m=1}^{\infty} \in Rad\left(F; \mathbb{N}^m\right),
	\end{equation*}
for every $\left(x_j^{(i)}\right)_{j=1}^{\infty} \in \ell_{p_i}^u(E_i)$, $i=2,\dots, m$. So, 
\begin{align*}
	&\int_{[0, 1]^{m-1}}\left\|\sum_{j_2,\dots, j_m=1}^{\infty}r_{j_2}(t_2)\cdots r_{j_m}(t_m)T_{a_1}\left(x_{j_2}^{(2)},\dots, x_{j_m}^{(m)}\right) \right\|^2dt_2\cdots dt_m\\
	&= \int_{[0, 1]^m}\left\|\sum_{j_1,\dots, j_m=1}^{\infty}r_{j_1}(t_1)\cdots r_{j_m}(t_m)T\left(x_{j_1}^{(1)},\dots, x_{j_m}^{(m)}\right) \right\|^2dt_1\cdots dt_m < \infty.
\end{align*}
Then, 
\begin{equation*}
\left(T_{a_1}\left(x_{j_2}^{(2)},\dots, x_{j_m}^{(m)}\right)\right)_{j_2,\dots, j_m=1}^{\infty} \in Rad\left(F; \mathbb{N}^{m-1}\right),
\end{equation*}
for every $\left(x_j^{(i)}\right)_{j=1}^{\infty} \in \ell_p^u(E_i)$, $i=2,\dots, m$. Therefore, $T_{a_1} \in \mathcal{L}_{al, p}^{mult, m-1}(E_2,\dots, E_m; F)$ and 
\begin{align*}
	\left\|T_{a_1} \right\|_{\mathcal{L}_{al, m}^{multi, m-1}} = \left\|(T_{a_1})^{\wedge} \right\| &= \sup_{\left\|\left(x_j^{(i)}\right)_{j=1}^{\infty} \right\|_{w, p} \le 1}\left\|(T_{a_1})^{\wedge}\left(\left(x_{j}^{(2)}\right)_{j}^{\infty},\dots, \left(x_{j}^{(m)}\right)_{j=1}^{\infty}\right) \right\|_{Rad\left(F; \mathbb{N}^{m-1}\right)}\\
	&= \sup_{\left\|\left(x_j^{(i)}\right)_{j=1}^{\infty} \right\|_{w, p} \le 1}\left\|\left(T_{a_1}\left(x_{j_2}^{(2)},\dots, x_{j_m}^{(m)}\right)\right)_{j_2,\dots, j_m=1}^{\infty} \right\|_{Rad\left(F; \mathbb{N}^{m-1}\right)}\\
	&\le \sup_{\left\|\left(x_j^{(i)}\right)_{j=1}^{\infty} \right\|_{w, p} \le 1}\left\|\left(T\left(x_{j_1}^{(1)},\dots, x_{j_m}^{(m)}\right)\right)_{j_2,\dots, j_m=1}^{\infty} \right\|_{Rad\left(F; \mathbb{N}^{m}\right)}\\
	&= \sup_{\left\|\left(x_j^{(i)}\right)_{j=1}^{\infty} \right\|_{w, p} \le 1}\left\|\overset{\wedge}{T}\left(\left(x_{j}^{(1)}\right)_{j}^{\infty},\dots, \left(x_{j}^{(m)}\right)_{j=1}^{\infty}\right) \right\|_{Rad\left(F; \mathbb{N}^{m}\right)}\\
	&\le \sup_{\left\|\left(x_j^{(i)}\right)_{j=1}^{\infty} \right\|_{w, p} \le 1} \left\|\overset{\wedge}{T} \right\|\left\|\left(x_j^{(1)}\right) \right\|_{w, p}\prod_{i=2}^{\infty}\left\|\left(x_j^{(i)}\right) \right\|_{w, p}\\
	&= \left\|T \right\|_{\mathcal{L}_{al, m}^{multi, m}}\|a_1\|.
\end{align*}

Now,  we check (CH2). Let $P \in \mathcal{P}_{al, p}^{multi, m}(E; F)$ and $a \in E$. By definition, $\check{P} \in \mathcal{L}_{al, p}^{multi, m}\left(E^m; F\right)$, thus by (CH1)
\begin{equation*}
	\check{P}_a \in  \mathcal{P}_{al, p}^{multi, m-1}(E; F).
\end{equation*}
By Lemma \ref{PMD},
\begin{equation*}
	(P_a)^{\vee} = \check{P}_a \in  \mathcal{P}_{al, p}^{multi, m-1}(E; F).
\end{equation*}
So, $P_a \in \mathcal{P}_{al, p}^{multi, m-1}(E; F)$ and
\begin{equation*}
	\|P_a\|_{\mathcal{P}_{al, p}^{multi, m-1}} = \|(P_a)^{\vee}\|_{\mathcal{L}_{al, p}^{multi, m-1}} = \|\check{P}_a\|_{\mathcal{L}_{al, p}^{multi, m-1}} \le \|\check{P}\|_{\mathcal{L}_{al, p}^{multi, m}}\|a\| = \|P\|_{\mathcal{P}_{al, p}^{multi, m}}\|a\|.
\end{equation*}

Now,  we check (CH3). Let $T \in \mathcal{L}_{al, p}^{multi, m}(E_1,\dots, E_m; F)$ and $\varphi \in E_{m+1}'$, then
for every $\left(x_j^{(i)}\right)_{j=1}^{\infty} \in \ell_p^u(E_i)$, $i=1,\dots, m+1$
\begin{align*}
	&\int_{[0, 1]^{m+1}}\left\|\sum_{j_1,\dots, j_{m+1}=1}^{\infty}r_{j_1}(t_1)\cdots r_{j_{m+1}}(t_{m+1})\varphi T\left(x_{j_1}^{(1)},\dots, x_{j_{m+1}}^{(m+1)}\right) \right\|^2dt_1\cdots dt_{m+1}\\
	&=\int_0^1\left\|\sum_{j=1}^{\infty}r_{j}(t)\varphi\left(x_{j}^{(m+1)}\right) \right\|^2dt\int_{[0, 1]^{m}}\left\|\sum_{j_1,\dots, j_{m}=1}^{\infty}r_{j_1}(t_1)\cdots r_{j_{m}}(t_{m}) T\left(x_{j_1}^{(1)},\dots, x_{j_{m}}^{(m)}\right) \right\|^2dt_1\cdots dt_{m}\\
	&= \sum_{j=1}^{\infty}\left\|\varphi\left(x_j^{(m+1)}\right) \right\|^2 \int_{[0, 1]^{m}}\left\|\sum_{j_1,\dots, j_{m}=1}^{\infty}r_{j_1}(t_1)\cdots r_{j_{m}}(t_{m}) T\left(x_{j_1}^{(1)},\dots, x_{j_{m}}^{(m)}\right) \right\|^2dt_1\cdots dt_{m}\\
	&\le \|\varphi\|^2 \left\|\left(x_j^{(m+1)}\right)_{j=1}^{\infty} \right\|_{w, 2}^2 \int_{[0, 1]^{m}}\left\|\sum_{j_1,\dots, j_{m}=1}^{\infty}r_{j_1}(t_1)\cdots r_{j_{m}}(t_{m}) T\left(x_{j_1}^{(1)},\dots, x_{j_{m}}^{(m)}\right) \right\|^2dt_1\cdots dt_{m}\\
	&< \infty.
\end{align*}
Therefore, $\varphi T \in \mathcal{L}_{al, p}^{multi, m+1}(E_1,\dots, E_{m+1})$ and
\begin{align*}
	\|\varphi T\|_{\mathcal{L}_{al, p}^{multi, m+1}} &= \left\|\left(\varphi T\right)^{\wedge} \right\|\\
	&= \sup_{\left\|\left(x_j^{(i)}\right)_{j=1}^{\infty} \right\|_{w, p} \le 1}\left\|\left(\varphi T\right)^{\wedge}\left(\left(x_j^{(1)}\right)_{j=1}^{\infty},\dots, \left(x_j^{(m+1)}\right)_{j=1}^{\infty}\right)\right\|_{Rad\left(F; \mathbb{N}^{m+1}\right)}\\
	&= \sup_{\left\|\left(x_j^{(i)}\right)_{j=1}^{\infty} \right\|_{w, p} \le 1}\left\|\left(\varphi T \left(x_{j_1}^{(1)},\dots, x_{j_{m+1}}^{(m+1)}\right)\right)_{j_1,\dots, j_{m+1}=1}^{\infty} \right\|_{Rad\left(F; \mathbb{N}^{m+1}\right)}\\
	&\le \sup_{\left\|\left(x_j^{(i)}\right)_{j=1}^{\infty} \right\|_{w, p} \le 1} \|\varphi\| \left\|\left(x_j^{(m+1)}\right)_{j=1}^{\infty} \right\|_{w, p} \left\|\left(T \left(x_{j_1}^{(1)},\dots, x_{j_{m}}^{(m)}\right)\right)_{j_1,\dots, j_{m}=1}^{\infty} \right\|_{Rad\left(F; \mathbb{N}^m\right)}\\
	&=  \sup_{\left\|\left(x_j^{(i)}\right)_{j=1}^{\infty} \right\|_{w, p} \le 1} \|\varphi\| \left\|\left(x_j^{(m+1)}\right)_{j=1}^{\infty} \right\|_{w, p} \left\|\overset{\wedge}{T}\left(\left(x_j^{(1)}\right)_{j=1}^{\infty},\dots, \left(x_j^{(m)}\right)_{j=1}^{\infty}\right)\right\|_{Rad\left(F; \mathbb{N}^{m}\right)}\\
	&\le \sup_{\left\|\left(x_j^{(i)}\right)_{j=1}^{\infty} \right\|_{w, p} \le 1} \|\varphi\| \left\|\left(x_j^{(m+1)}\right)_{j=1}^{\infty} \right\|_{w, p} \left\|\overset{\wedge}{T} \right\|\prod_{i=1}^{m}\left\|\left(x_j^{(i)}\right)_{j=1}^{\infty} \right\|_{w, p}\\
	&= \|\varphi\| \|T\|_{\mathcal{L}_{al, p}^{multi, m+1}}.
\end{align*}

Now,  we check (CH4). Let $P \in \mathcal{P}_{al, p}^{multi, m}(E; F)$ and $\varphi \in E'$. As we did  in (CH2), to see that $\varphi P \in \mathcal{P}_{al, p}^{multi, m+1}(E; F)$, we just need to show that $(\varphi P)^{\vee} \in \mathcal{L}_{al, p}^{multi, m}\left(E^m; F\right)$. Note that, 
\begin{equation*}
	(\varphi P)^{\vee}\left(x_{j_1}^{(1)},\dots, x_{j_{m+1}}^{(m+1)} \right) = \displaystyle\frac{\varphi\left(x_{j_1}^{(1)}\right)\check{P}\left(x_{j_2}^{(2)} , \dots, x_{j_{m+1}}^{(m+1)}\right) + \cdots + \varphi\left(x_{j_{m+1}}^{(m+1)}\right)\check{P}\left(x_{j_1}^{(1)}, \dots, x_{j_m}^{(m)}\right)}{(m+1)!}.
\end{equation*}
Since $P \in \mathcal{P}_{al, p}^{multi, m}(E; F)$, we have $\check{P} \in \mathcal{L}_{al, p}^{multi, m}\left(E^m; F\right)$. Thus, for every $\left(x_j^{(i)}\right)_{j=1}^{\infty} \in \ell_p^u(E)$, $i=1,\dots, m+1$
\begin{equation*}
	\left(\check{P}\left(x_{j_2}^{(2)} , \dots, x_{j_{m+1}}^{(m+1)}\right)\right)_{j_2,\dots,j_{m+1}=1}^{\infty},\dots,\left(\check{P}\left(x_{j_1}^{(1)},\dots, x_{j_m}^{(m)} \right) \right)_{j_1,\dots, j_m=1}^{\infty} \in Rad\left(F; \mathbb{N}^m\right).
\end{equation*}
By (CH3), we have 
\begin{equation*}
	\left(\varphi\left(x_{j_k}^{(k)} \right) \check{P}\left(x_{j_1}^{(1)},\dots, x_{j_{k-1}}^{(k-1)}, x_{j_{k+1}}^{(k+1)},\dots, x_{j_{m+1}}^{(m+1)} \right) \right)_{j_1,\dots, j_{m+1}=1}^{\infty} \in Rad\left(F; \mathbb{N}^{m+1}\right).
\end{equation*}
Then,
\begin{align*}
	&\left((\varphi P)^{\vee}\left(x_{j_1}^{(1)},\dots, x_{j_{m+1}}^{(m+1)} \right)\right)_{j_1,\dots, j_{m+1}=1}^{\infty}\\ 
	&= \left(\displaystyle\frac{\varphi\left(x_{j_1}^{(1)}\right)\check{P}\left(x_{j_2}^{(2)} , \dots, x_{j_{m+1}}^{(m+1)}\right) + \cdots + \varphi\left(x_{j_{m+1}}^{(m+1)}\right)\check{P}\left(x_{j_1}^{(1)}, \dots, x_{j_m}^{(m)}\right)}{(n+1)!}  \right)_{j_1,\dots, j_{m+1}=1}^{\infty}\\
\end{align*}
belongs to $Rad\left(F; \mathbb{N}^{n+1}\right)$  for every $\left(x_j^{(i)}\right)_{j=1}^{\infty} \in \ell_p^u(E)$, $i=1,\dots, m+1$. Therefore,
\begin{equation*}
	(\varphi P)^{\vee} \in \mathcal{L}_{al, p}^{multi, m+1}\left(E^m; F\right).
\end{equation*}

The condition (CH5) is immediate  by the definition of multiple almost $p$-summing $m$-homogeneous polynomials. 

Since $\beta_1 = \beta_2 = \beta_3 = 1$, by \cite[remark $3.3$]{PR14}, the sequence is coherent and compatible with $\prod_{al, p}$. 
\end{proof}

\section{Multi-type and multi-cotype of multilinear operators}\label{PGS}
We begin by proposing a multi-type and multi-cotype concept of multilinear operators.

\begin{definition}
	Let $m, n_1, \dots, n_m \in \mathbb{N}$ and $0 < p_1,\dots, p_m, q < \infty$. We say that $T \in \mathcal{L}(E_1,\dots, E_m; F)$ has:
	\begin{description}
		\item[(a)] {\it Multi-type} $(p_1,\dots, p_m)$ if there is a constant $C > 0$ such that
		\begin{equation}\label{MT}
			\left\|\left(T(x_{j_1}^{(1)},\dots, x_{j_m}^{(m)})\right)_{j_1,\dots, j_m = 1}^{n_1,\dots, n_m} \right\|_{Rad(F; \mathbb{N}^m)} \le C \prod_{i=1}^{m}\left\|\left(x_j^{(i)}\right)_{j=1}^{n_i} \right\|_{p_i}
		\end{equation}
	for any choice of  finitely many vectors $(x_{j}^{(1)},\dots, x_{j}^{(m)}) \in E_1\times \cdots \times E_m$.

	\item[(b)] {\it Multi-cotype} $q$ if there is a constant $C > 0$ such that
	\begin{equation}\label{MCT}
			\left\|\left(T(x_{j_1}^{(1)},\dots, x_{j_m}^{(m)})\right)_{j_1,\dots, j_m = 1}^{n_1,\dots, n_m} \right\|_{q} \le C \prod_{i=1}^{m}\left\|\left(x_j^{(i)}\right)_{j=1}^{n_i} \right\|_{Rad(E_i)}
	\end{equation}
for any choice of  finitely many vectors $(x_{j}^{(1)},\dots, x_{j}^{(m)}) \in E_1\times \cdots \times E_m$.
	\end{description}
\end{definition}

We denote the linear space of all $m$-linear operators with multi-type $(p_1,\dots, p_m)$ and with multi-cotype $q$ from $E_1\times \cdots \times E_m$ to $F$ by $\tau_{p_1,\dots, p_m}^{multi}(E_1,\dots, E_m; F)$ and $\mathcal{C}_q^{m, multi}(E_1,\dots,$ $E_m; F)$, respectivaly. If  $p_1=\cdots = p_m = p$,  we just write $\tau_{p}^{multi}(E_1,\dots, E_m; F)$.  
It is not difficult to see that the infimum of the constants in \eqref{MT} and \eqref{MCT},  denoted by $\|T\|_{\tau_{p_1,\dots, p_m}^{multi}}$ and $\|T\|_{\mathcal{C}_q^{m, multi}}$, are a norm in $\tau_{p_1,\dots, p_m}^{multi}(E_1,\dots, E_m; F )$ and $\mathcal{C}_q^{m, multi}(E_1,\dots, E_m; F )$, respectively.

From now on, we consider an integer $m \geq 2$  and $p_1,\dots, p_m, q$ such that
\begin{equation*}
	\frac{1}{2} \le \frac{1}{p_1}+\cdots + \frac{1}{p_m} \le 1 \text{ and  } q \ge \frac{2}{m}.
\end{equation*}

\begin{theorem}\label{MTSeq}
	Let $T \in \mathcal{L}(E_1,\dots, E_m; F)$, the statements  are equivalents:
	\begin{description}
		\item[(a)] $T$ has multi-type $(p_1,\dots, p_m)$;
		\item[(b)] $\left(T\left(x_{j_1}^{(1)},\dots, x_{j_m}^{(m)}\right)\right)_{j_1,\dots, j_m=1}^{\infty} \in Rad(F; \mathbb{N}^m)$ whenever $\left(x_j^{(i)}\right)_{j=1}^{\infty} \in \ell_{p_i}(E_i)$, $i=1,\dots, m$;
		\item[(c)] The map
		\begin{equation*}
			\hat{T}\colon \ell_{p_1}(E_1)\times \cdots \times \ell_{p_m}(E_m) \rightarrow Rad\left(F; \mathbb{N}^m\right)
		\end{equation*}
	given by
	\begin{equation*}
		\hat{T}\left(\left(x_j^{(1)}\right)_{j=1}^{\infty},\dots, \left(x_j^{(m)}\right)_{j=1}^{\infty}\right) = \left(T\left(x_{j_1}^{(1)},\dots, x_{j_m}^{(m)}\right)\right)_{j_1,\dots, j_m=1}^{\infty}
	\end{equation*}
is {\bf a } well-defined continuous $m$-linear operator.
	\end{description} 

\end{theorem}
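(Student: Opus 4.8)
The plan is to establish the cycle $(a)\Rightarrow(b)\Rightarrow(c)\Rightarrow(a)$, treating the two short implications first and leaving the genuinely delicate one, $(a)\Rightarrow(b)$, for last. The implication $(c)\Rightarrow(a)$ is the quickest: assuming $\hat{T}$ is continuous, I would feed it finite families completed with zeros. For a finite choice of vectors $\left(x_j^{(i)}\right)_{j=1}^{n_i}$, regarded as elements of $\ell_{p_i}(E_i)$, one has $\left\|\left(x_j^{(i)}\right)_{j=1}^{n_i}\right\|_{\ell_{p_i}(E_i)} = \left\|\left(x_j^{(i)}\right)_{j=1}^{n_i}\right\|_{p_i}$, while $\hat{T}$ evaluated at these truncated sequences is exactly the truncated $m$-sequence appearing on the left-hand side of \eqref{MT}. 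Hence the bound $\|\hat{T}\left((x_j^{(1)}),\dots,(x_j^{(m)})\right)\|_{Rad(F;\mathbb{N}^m)} \le \|\hat{T}\|\prod_{i=1}^m \|(x_j^{(i)})\|_{\ell_{p_i}(E_i)}$ yields \eqref{MT} with $C = \|\hat{T}\|$, so $T$ has multi-type $(p_1,\dots,p_m)$.

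For $(b)\Rightarrow(c)$ I would argue exactly as in Proposition \ref{CONT}: statement $(b)$ says precisely that $\hat{T}$ is a well-defined $m$-linear map into $Rad(F;\mathbb{N}^m)$, and continuity then follows from the closed graph theorem for multilinear maps between Banach spaces. To check the graph is closed, I would use that convergence in $\ell_{p_i}(E_i)$ forces coordinatewise convergence in $E_i$, that convergence in $Rad(F;\mathbb{N}^m)$ forces coordinatewise convergence in $F$ (by Lemma \ref{UESI}), and that $T$ is continuous; uniqueness of limits then identifies the limit with $\hat{T}$ of the limit sequences, precisely as in Proposition \ref{CONT}.

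The implication $(a)\Rightarrow(b)$ is the main obstacle, because $Rad(\cdot;\mathbb{N}^m)$ is not finitely determined, so the uniform bound on finite sections provided by \eqref{MT} does not by itself guarantee membership in $Rad(F;\mathbb{N}^m)$. What I must show is that the partial-sum net $S_{n_1,\dots,n_m} = \sum_{j_1=1}^{n_1}\cdots\sum_{j_m=1}^{n_m} r_{j_1}(t_1)\cdots r_{j_m}(t_m)\,T\!\left(x_{j_1}^{(1)},\dots,x_{j_m}^{(m)}\right)$ is Cauchy in $L_2([0,1]^m;F)$, and then invoke completeness of $Rad(F;\mathbb{N}^m)$. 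The plan is: given $\left(x_j^{(i)}\right)_{j=1}^\infty \in \ell_{p_i}(E_i)$ and indices $n_i \le n_i'$, expand $\prod_{i=1}^m\left(\sum_{j=1}^{n_i} + \sum_{j=n_i+1}^{n_i'}\right)$ to write $S_{n_1',\dots,n_m'} - S_{n_1,\dots,n_m}$ as a sum of $2^m-1$ blocks, each of which involves, in at least one coordinate $i$, only the tail indices $n_i+1,\dots,n_i'$. The $L_2$-norm of each block equals the $Rad$-norm of the corresponding truncated $m$-sequence; after harmlessly relabelling indices (the joint distribution of the Rademacher products is unchanged) I would apply the multi-type inequality \eqref{MT} to bound each block by $C$ times a product of $\ell_{p_i}$-norms in which at least one factor is a tail $\left\|\left(x_j^{(i)}\right)_{j=n_i+1}^{n_i'}\right\|_{p_i}$. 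Since each $\left(x_j^{(i)}\right)_{j=1}^\infty \in \ell_{p_i}(E_i)$, these tails tend to $0$, so the whole difference tends to $0$ and the net is Cauchy; this gives $\left(T(x_{j_1}^{(1)},\dots,x_{j_m}^{(m)})\right)_{j_1,\dots,j_m=1}^\infty \in Rad(F;\mathbb{N}^m)$, which is $(b)$. The delicate points are the relabelling invariance of the $Rad$-norm, which rests on the contraction-type estimates of Proposition \ref{MPC} together with Remark \ref{RCL2}, and verifying that the block decomposition interacts correctly with the multilinearity of $T$.
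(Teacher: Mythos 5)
You prove the same cycle $(a)\Rightarrow(b)\Rightarrow(c)\Rightarrow(a)$ as the paper, and two of the three implications are handled identically: $(c)\Rightarrow(a)$ by evaluating $\hat{T}$ at finitely supported sequences, and $(b)\Rightarrow(c)$ by the closed-graph argument of Proposition \ref{CONT}. The real divergence is in $(a)\Rightarrow(b)$: the paper dismisses it as ``immediate from Proposition \ref{MPC}'', whereas you correctly observe that it cannot be immediate --- since $Rad(\cdot;\mathbb{N}^m)$ is not finitely determined, the uniform bound on finite sections that \eqref{MT} provides does not by itself yield membership --- and you supply the argument that is actually needed: decompose a difference of partial sums into $2^m-1$ blocks, each having a tail range in at least one coordinate, estimate each block via \eqref{MT} so that at least one factor is a vanishing $\ell_{p_i}$-tail, and conclude that the partial-sum net is Cauchy. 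This exploits the full strength of the multi-type inequality on blocks rather than mere boundedness of sections (which, as the $c_0$-type examples behind the non-finite-determinacy show, would prove nothing), and it is the correct substantiation of the paper's one-line claim. Two streamlinings are worth noting. First, for a block $A_1\times\cdots\times A_m$ you can avoid the relabelling step, and hence any appeal to distributional invariance or to Proposition \ref{MPC}, by applying \eqref{MT} to the padded sequences $y^{(i)}_j=x^{(i)}_j$ for $j\in A_i$ and $y^{(i)}_j=0$ otherwise: multilinearity of $T$ annihilates every term outside the block, so the block's $L_2$-norm is literally a finite-section $Rad$-norm. Second, at the end the completeness you need is that of $L_2([0,1]^m;F)$, since convergence of the partial sums is precisely the definition of membership in $Rad(F;\mathbb{N}^m)$; if you prefer to invoke completeness of $Rad(F;\mathbb{N}^m)$ itself, you must add Lemma \ref{UESI} to identify the limit coordinatewise with the given $m$-sequence.
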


\begin{proof}
It is immediate from Proposition \ref{MPC} that  $(a)$ implies $(b)$. It also is immediate to see  that $(c)$ implies  $(a)$,  and the prove of  $(b)$ implies $(c)$ is analogous to the proof  of Proposition \ref{CONT}.

\end{proof}

Following standard  computations, we have that $\|\hat{T}\| = \|T\|_{\tau_{p_1,\dots, p_m}^{multi}}$.

The proof of the next result is similar to the proof of  Theorem \ref{MTSeq}.  

\begin{theorem}\label{MCTSeq}
	Let $T \in \mathcal{L}(E_1,\dots, E_m; F)$, the statements are equivalents:
	\begin{description}
		\item[(a)] $T$ has multi-cotype $q$;
		\item[(b)] $\left(T\left(x_{j_1}^{(1)},\dots, x_{j_m}^{(m)}\right)\right)_{j_1,\dots, j_m=1}^{\infty} \in \ell_q(F; \mathbb{N}^m)$ whenever $\left(x_j^{(i)}\right)_{j=1}^{\infty} \in Rad(E_i)$, $i=1,\dots, m$;
		\item[(c)] The map
		\begin{equation*}
			\hat{T}\colon Rad(E_1)\times \cdots \times Rad(E_m) \rightarrow \ell_q\left(F; \mathbb{N}^m\right)
		\end{equation*}
		given by
		\begin{equation*}
			\hat{T}\left(\left(x_j^{(1)}\right)_{j=1}^{\infty},\dots, \left(x_j^{(m)}\right)_{j=1}^{\infty}\right) = \left(T\left(x_{j_1}^{(1)},\dots, x_{j_m}^{(m)}\right)\right)_{j_1,\dots, j_m=1}^{\infty}
		\end{equation*}
		is a well-defined continuous $m$-linear operator.
	\end{description}
\end{theorem}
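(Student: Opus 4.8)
The plan is to follow exactly the three-step scheme used for Theorem \ref{MTSeq}, but with the roles of the sequence classes interchanged: now $Rad$ sits on the domain side and $\ell_q$ on the target side. I would establish $(a)\Rightarrow(b)$, then $(b)\Rightarrow(c)$ via a closed-graph argument, and finally the trivial $(c)\Rightarrow(a)$, thereby closing the cycle of equivalences.

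For $(a)\Rightarrow(b)$, fix $\left(x_j^{(i)}\right)_{j=1}^{\infty} \in Rad(E_i)$ for $i=1,\dots,m$. For arbitrary $n_1,\dots,n_m$, the finite inequality \eqref{MCT} combined with the monotonicity of $Rad$-norms under restriction to a smaller index set (the ``In particular'' part of Proposition \ref{MPC}, in its single-index instance) gives
\begin{equation*}
	\left\|\left(T\left(x_{j_1}^{(1)},\dots,x_{j_m}^{(m)}\right)\right)_{j_1,\dots,j_m=1}^{n_1,\dots,n_m}\right\|_q \le C\prod_{i=1}^{m}\left\|\left(x_j^{(i)}\right)_{j=1}^{n_i}\right\|_{Rad(E_i)} \le C\prod_{i=1}^{m}\left\|\left(x_j^{(i)}\right)_{j=1}^{\infty}\right\|_{Rad(E_i)}.
\end{equation*}
Hence the truncated $\ell_q$-norms are uniformly bounded; since the $\ell_q$-sequence class $\ell_q\left(\cdot;\mathbb{N}^m\right)$ is finitely determined, the whole family lies in $\ell_q\left(F;\mathbb{N}^m\right)$, which is precisely $(b)$.

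For $(b)\Rightarrow(c)$ I would transcribe the closed-graph argument of Proposition \ref{CONT}. By $(b)$ the induced map $\hat{T}$ is a well-defined $m$-linear operator into $\ell_q\left(F;\mathbb{N}^m\right)$, so by the closed graph theorem it suffices to take a sequence on the graph with $\left(x_j^{k,(i)}\right)_k \to \left(x_j^{(i)}\right)$ in $Rad(E_i)$ and $\hat{T}(\cdots)_k \to \left(z_{j_1,\dots,j_m}\right)$ in $\ell_q\left(F;\mathbb{N}^m\right)$. The single-index instance of Lemma \ref{UESI} forces coordinatewise convergence of the inputs, while norm convergence in $\ell_q$ forces coordinatewise convergence of the outputs; continuity of $T$ together with uniqueness of limits then yields $z_{j_1,\dots,j_m}=T\left(x_{j_1}^{(1)},\dots,x_{j_m}^{(m)}\right)$, so the graph is closed and $\hat{T}$ is continuous. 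The remaining implication $(c)\Rightarrow(a)$ is immediate: evaluating $\hat{T}$ on zero-padded finite sequences and using its operator norm recovers \eqref{MCT} with $C=\|\hat{T}\|$, and a routine computation gives $\|\hat{T}\|=\|T\|_{\mathcal{C}_q^{m, multi}}$.

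The only genuinely delicate step is $(a)\Rightarrow(b)$: because $Rad$ is not finitely determined one cannot reconstruct membership in the domain classes from truncations, but this is never required, since membership in each $Rad(E_i)$ is assumed outright. The real work is instead the monotonicity supplied by Proposition \ref{MPC}, which dominates the truncated target norms; after that, the finite-determinacy of $\ell_q$ on the target side closes the argument. Everything else is a faithful adaptation of the proofs of Theorem \ref{MTSeq} and Proposition \ref{CONT}.
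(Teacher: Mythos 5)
Your proposal is correct and follows essentially the same route as the paper, whose proof of this theorem is simply ``similar to the proof of Theorem~\ref{MTSeq}'': $(a)\Rightarrow(b)$ via Proposition~\ref{MPC}, $(b)\Rightarrow(c)$ by the closed-graph argument of Proposition~\ref{CONT}, and $(c)\Rightarrow(a)$ immediately. In fact you supply more detail than the paper does, correctly identifying that in the cotype setting Proposition~\ref{MPC} handles the $Rad$-side monotonicity while the finite determinacy of $\ell_q\left(\cdot;\mathbb{N}^m\right)$ closes the target side.
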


Following standard computations, we have that $\|\hat{T}\| = \|T\|_{\mathcal{C}_q^{m, multi}}$.

\begin{proposition}\label{MasmT}
	Let  $E_1,\dots, E_m, F$ be Banach spaces. Then,
	\begin{equation*}
		\mathcal{L}_{al, (p_1,\dots, p_m)}^{multi}(E_1,\dots, E_m; F) \overset{1}{\hookrightarrow} \tau_{p_1,\dots, p_m}^{multi}(E_1,\dots, E_m; F).
	\end{equation*}
\end{proposition}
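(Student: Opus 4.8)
The plan is to read off the definition of $\overset{1}{\hookrightarrow}$ given in the introduction: I must show that every $T \in \mathcal{L}_{al, (p_1,\dots, p_m)}^{multi}(E_1,\dots, E_m; F)$ has multi-type $(p_1,\dots, p_m)$, which yields the inclusion of classes, and that $\|T\|_{\tau_{p_1,\dots, p_m}^{multi}} \le \|T\|_{\mathcal{L}_{al, (p_1,\dots, p_m)}^{multi}}$ for each such $T$. The linear-subspace requirement is then automatic, since both classes are subspaces of $\mathcal{L}(E_1,\dots, E_m; F)$.

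The key observation is the contrast between the two classes: multiple almost summability (Proposition \ref{CONT}) is tested on \emph{weakly} $p_i$-summable sequences with norm $\|\cdot\|_{w, p_i}$, whereas multi-type \eqref{MT} is an estimate over \emph{(finitely supported, hence strongly) $p_i$-summable} sequences with norm $\|\cdot\|_{p_i}$. The bridge is the elementary inequality $\|(x_j^{(i)})_{j=1}^{n_i}\|_{w, p_i} \le \|(x_j^{(i)})_{j=1}^{n_i}\|_{p_i}$, valid because $|\varphi(x_j^{(i)})| \le \|x_j^{(i)}\|$ for every $\varphi \in B_{E_i'}$; in particular every finite sequence belongs to $\ell_{p_i}^u(E_i)$.

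Concretely, I would fix finitely many vectors $(x_j^{(i)})_{j=1}^{n_i} \subset E_i$, regard each as an element of $\ell_{p_i}^u(E_i)$ by completing with zeros, and apply the continuity of the induced operator $\overset{\wedge}{T}$ furnished by Proposition \ref{CONT}. Since only finitely many coordinates are nonzero, $\overset{\wedge}{T}\left((x_j^{(1)})_{j=1}^{\infty},\dots,(x_j^{(m)})_{j=1}^{\infty}\right)$ is the finitely supported array $\left(T(x_{j_1}^{(1)},\dots,x_{j_m}^{(m)})\right)_{j_1,\dots,j_m=1}^{n_1,\dots,n_m}$, whose $Rad(F;\mathbb{N}^m)$-norm is unchanged by truncation. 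Hence
\begin{equation*}
	\left\|\left(T(x_{j_1}^{(1)},\dots,x_{j_m}^{(m)})\right)_{j_1,\dots,j_m=1}^{n_1,\dots,n_m}\right\|_{Rad(F;\mathbb{N}^m)} \le \left\|\overset{\wedge}{T}\right\| \prod_{i=1}^{m} \left\|(x_j^{(i)})_{j=1}^{n_i}\right\|_{w, p_i} \le \left\|\overset{\wedge}{T}\right\| \prod_{i=1}^{m} \left\|(x_j^{(i)})_{j=1}^{n_i}\right\|_{p_i}.
\end{equation*}
This is precisely \eqref{MT} with the admissible constant $C = \|\overset{\wedge}{T}\| = \|T\|_{\mathcal{L}_{al, (p_1,\dots, p_m)}^{multi}}$, so $T$ has multi-type $(p_1,\dots,p_m)$; taking the infimum over admissible constants yields $\|T\|_{\tau_{p_1,\dots, p_m}^{multi}} \le \|T\|_{\mathcal{L}_{al, (p_1,\dots, p_m)}^{multi}}$, which is the desired $\overset{1}{\hookrightarrow}$.

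There is no serious obstacle: the whole argument is a transfer of the defining continuity estimate of Proposition \ref{CONT} from the weak to the strong summable setting via $\|\cdot\|_{w, p_i} \le \|\cdot\|_{p_i}$. The only point deserving a line of justification is the identification of $\overset{\wedge}{T}$ evaluated on finitely supported inputs with the finite array in \eqref{MT}, together with the remark that truncating a $Rad(F;\mathbb{N}^m)$-sequence to its (finite) support leaves its norm unchanged.
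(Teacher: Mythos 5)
Your proof is correct and is essentially the paper's own argument: both rest on the elementary domination $\|\cdot\|_{w,p_i}\le\|\cdot\|_{p_i}$ (equivalently, the inclusion $\ell_{p_i}(E_i)\subset\ell_{p_i}^u(E_i)$) combined with the continuity of the induced operator $\overset{\wedge}{T}$ from Proposition \ref{CONT}, giving the admissible constant $C=\|\overset{\wedge}{T}\|=\|T\|_{\mathcal{L}_{al,(p_1,\dots,p_m)}^{multi}}$. The only cosmetic difference is that you verify the finite-array inequality \eqref{MT} directly (noting that finitely supported inputs produce finitely supported arrays of unchanged $Rad(F;\mathbb{N}^m)$-norm), whereas the paper checks membership on infinite strongly summable sequences and then bounds the operator norm of $\hat{T}\colon\ell_{p_1}(E_1)\times\cdots\times\ell_{p_m}(E_m)\rightarrow Rad\left(F;\mathbb{N}^m\right)$ using the identity $\|T\|_{\tau_{p_1,\dots,p_m}^{multi}}=\|\hat{T}\|$ that follows Theorem \ref{MTSeq}.
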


\begin{proof}
	Let $T \in \mathcal{L}_{al, (p_1,\dots, p_m)}^{multi}(E_1,\dots, E_m; F)$.  As $\ell_{p_i}(E_i) \subset \ell_{p_i}^u(E_i)$, for every $\left(x_j^{(i)}\right)_{j=1}^{\infty} \in \ell_{p_i}(E_i)$, 
	we have 
	\begin{equation*}
		\left(T\left(x_{j_1}^{(1)},\dots, x_{j_m}^{(m)}\right)\right)_{j_1,\dots, j_m=1}^{\infty} \in Rad\left(F; \mathbb{N}^m\right).
	\end{equation*}
So, $T \in \tau_{p_1,\dots, p_m}^{multi}(E_1,\dots, E_m; F)$ and
\begin{align*}
	\|T\|_{\tau_{p_1,\dots, p_m}^{multi}} = \left\| \hat{T}\right\| &= \sup_{\left\|\left(x_j^{(i)}\right)_{j=1}^{\infty} \right\|_{p_i}\le 1}\left\|\hat{T}\left(\left(x_j^{(1)}\right)_{j=1}^{\infty},\dots, \left(x_j^{(m)}\right)_{j=1}^{\infty}\right) \right\|_{Rad\left(F; \mathbb{N}^m\right)}\\
	&\le \sup_{\left\|\left(x_j^{(i)}\right)_{j=1}^{\infty} \right\|_{w, p_i}\le 1}\left\|\overset{\wedge}{T}\left(\left(x_j^{(1)}\right)_{j=1}^{\infty},\dots, \left(x_j^{(m)}\right)_{j=1}^{\infty}\right) \right\|_{Rad\left(F; \mathbb{N}^m\right)}\\
	&= \left\|\overset{\wedge}{T} \right\|\\
	&= \|T\|_{\mathcal{L}_{al, (p_1,\dots, p_m)}^{multi}}.
\end{align*}
\end{proof}

In other words, every multiple almost $(p_1,\dots, p_m)$-summing  operator has multi-type $(p_1,\dots, p_m)$.

According to \cite{BC16}, we can prove the following result. 

\begin{proposition}
	Let $T \in \mathcal{L}(E_1,\dots, E_m; F)$. If $T$ has some proper multi-cotype $q$, then $T$ has some proper cotype $q$, according to \cite[Definition $3.1$]{BC16}.
\end{proposition}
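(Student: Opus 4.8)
The plan is to derive the ordinary cotype inequality from the multi-cotype inequality \eqref{MCT} by restricting the family of multi-indices $(j_1,\dots,j_m)$ to the diagonal $j_1=\cdots=j_m$. Concretely, I would assume that $T$ has proper multi-cotype $q$ with constant $C>0$, fix finitely many vectors $x_j^{(i)}\in E_i$ for $j=1,\dots,n$ and $i=1,\dots,m$, and apply \eqref{MCT} with $n_1=\cdots=n_m=n$. This bounds the $\ell_q$-norm of the full $m$-sequence $\left(T(x_{j_1}^{(1)},\dots,x_{j_m}^{(m)})\right)_{j_1,\dots,j_m=1}^{n}$ by $C\prod_{i=1}^{m}\left\|(x_j^{(i)})_{j=1}^{n}\right\|_{Rad(E_i)}$, which is the only hypothesis I need.

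The key step is the observation that the diagonal terms $T(x_j^{(1)},\dots,x_j^{(m)})$, indexed by $j\in\{1,\dots,n\}$, form a subfamily of the full family indexed by $\{1,\dots,n\}^m$. Since the $\ell_q$-norm only increases when the index set is enlarged, I would record the elementary inequality
\begin{equation*}
	\left(\sum_{j=1}^{n}\left\|T(x_{j}^{(1)},\dots,x_{j}^{(m)})\right\|^q\right)^{1/q}\le \left(\sum_{j_1,\dots,j_m=1}^{n}\left\|T(x_{j_1}^{(1)},\dots,x_{j_m}^{(m)})\right\|^q\right)^{1/q},
\end{equation*}
which is exactly the statement that $\ell_q(\cdot;\mathbb{N}^m)$ is sequentially compatible with $\ell_q(\cdot)$ in the sense of Definition \ref{DDD.2.10.}. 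Chaining this with the bound coming from \eqref{MCT} then yields
\begin{equation*}
	\left(\sum_{j=1}^{n}\left\|T(x_{j}^{(1)},\dots,x_{j}^{(m)})\right\|^q\right)^{1/q}\le C\prod_{i=1}^{m}\left\|(x_j^{(i)})_{j=1}^{n}\right\|_{Rad(E_i)},
\end{equation*}
with the \emph{same} constant $C$, which is precisely the cotype $q$ estimate. Hence $T$ has proper cotype $q$.

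The main obstacle I anticipate is not analytic but a matter of bookkeeping: confirming that \cite[Definition $3.1$]{BC16} formulates cotype $q$ of a multilinear operator as the diagonal estimate in the last display, equivalently that the relevant single-index sequence class is $\ell_q(\cdot)$ with its usual norm while $Rad(E_i)$ appears on the right. Once the two definitions are aligned, the entire argument reduces to subfamily-monotonicity of the $\ell_q$-norm (the sequential compatibility recorded above), and because no constant is lost in passing to the diagonal, a proper multi-cotype produces a proper cotype, as required.
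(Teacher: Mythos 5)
Your proof is correct, but it takes a genuinely different route from the paper's. The paper never touches the finite inequalities: it invokes Theorem \ref{MCTSeq} to convert multi-cotype $q$ into the sequence inclusion $\left(T(x_{j_1}^{(1)},\dots,x_{j_m}^{(m)})\right)_{j_1,\dots,j_m=1}^{\infty}\in\ell_q\left(F;\mathbb{N}^m\right)$ whenever $(x_j^{(i)})_{j=1}^{\infty}\in Rad(E_i)$, then uses sequential compatibility of $\ell_q(\cdot;\mathbb{N}^m)$ with $\ell_q(\cdot)$ (Definition \ref{DDD.2.10.}) to pass to the diagonal sequence, and finally cites \cite[Theorem $3.6$]{BC16} --- the sequence-theoretic characterization of cotype --- to conclude. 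You instead work directly with the defining inequality \eqref{MCT} for finite families and the elementary monotonicity of the $\ell_q$-(quasi-)norm under passing to the diagonal subfamily; this is sound, since each diagonal term occurs exactly once among the $m$-indexed terms and all summands are nonnegative (so the step is valid even in the quasi-norm range $q<1$, which occurs when $m\ge 3$). What your route buys: it is more elementary, needing no characterization theorems on either end (the paper's argument leans on \cite[Theorem $3.6$]{BC16}, which hides a closed-graph-type argument), and it is quantitative --- the cotype constant is at most the multi-cotype constant $C$, a piece of information the paper's qualitative argument does not record. What the paper's route buys: it stays inside its sequence-class framework and is insulated from the precise normalization of the inequality in \cite[Definition $3.1$]{BC16}, which is exactly the ``bookkeeping'' issue you flag as your one remaining check. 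One small terminological slip on your side: the finite diagonal inequality you display is not literally ``sequential compatibility'' (Definition \ref{DDD.2.10.} concerns infinite sequences), though for $\ell_q$ the two amount to the same monotonicity, so nothing is lost.
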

\begin{proof}
 Suppose that $T \in \mathcal{L}(E_1,\dots, E_m; F)$ has some proper multi-cotype $q$, thus by Theorem \ref{MCTSeq}
\begin{equation*}
	\left(T\left(x_{j_1}^{(1)},\dots, x_{j_m}^{(m)}\right)\right)_{j_1,\dots, j_m=1}^{\infty} \in \ell_q\left(F; \mathbb{N}^m\right),
\end{equation*}
for every $\left(x_j^{(i)}\right)_{j=1}^{\infty} \in Rad(E_i)$, $i=1,\dots, m$. Since $\ell_q(\cdot; \mathbb{N}^m)$ is sequentialy compatible with $\ell_q(\cdot)$, we have
\begin{align*}
	\left(T\left(x_j^{(1)},\dots, x_j^{(m)}\right)\right)_{j=1}^{\infty} \in \ell_q(F) {\bf } 
\end{align*}
for every $\left(x_j^{(i)}\right)_{j=1}^{\infty} Rad(E_i)$. Therefore, by \cite[Theorem $3.6$]{BC16}, $T$ has cotype q.
\end{proof}

The Proposition  \ref{MasmT} ensure that the operator in the Example	\ref{EX1} has multi-type $(p_1,\dots, p_{m+1})$.

\begin{theorem}
 $\left(\tau_{p_1,\dots, p_m}^{multi}, \|\cdot\|_{\tau_{p_1,\dots, p_m}^{multi}} \right)$ is a Banach ideal and $\left(\mathcal{C}_q^{m, multi}, \|\cdot\|_{\mathcal{C}_q^{m, multi}} \right)$ is a $q$-Banach ideal  of $m$-linear operators.
\end{theorem}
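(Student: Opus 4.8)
The plan is to handle both classes inside the induced-operator framework already used in Section~2, leaning on the characterizations in Theorems~\ref{MTSeq} and \ref{MCTSeq}: an operator $T$ lies in $\tau_{p_1,\dots,p_m}^{multi}$ (resp. in $\mathcal{C}_q^{m,multi}$) exactly when the induced map $\hat{T}$ into $Rad(F;\mathbb{N}^m)$ (resp. into $\ell_q(F;\mathbb{N}^m)$) is a well-defined continuous $m$-linear operator, and in both cases $\|\hat{T}\|$ equals the ideal (quasi-)norm of $T$. This reduces every verification to a statement about $\hat{T}$ in a space of continuous $m$-linear maps between sequence spaces. For $\tau_{p_1,\dots,p_m}^{multi}$ I would check the three axioms of a Banach multi-ideal together with completeness; for $\mathcal{C}_q^{m,multi}$ the identical scheme runs, the only structural change being that $\ell_q(\cdot;\mathbb{N}^m)$ is merely a $q$-Banach sequence class, so the ideal functional satisfies the $q$-triangle inequality and one obtains a $q$-Banach ideal rather than a Banach ideal.

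For completeness I would mirror the proof of Theorem~\ref{COMPL}. Given a Cauchy sequence $(T_k)$ in the ideal (quasi-)norm, the induced maps $(\hat{T}_k)$ form a Cauchy sequence in $\mathcal{L}(\ell_{p_1}(E_1),\dots,\ell_{p_m}(E_m);Rad(F;\mathbb{N}^m))$ (resp. in $\mathcal{L}(Rad(E_1),\dots,Rad(E_m);\ell_q(F;\mathbb{N}^m))$), which is complete; in the cotype case this uses that $\ell_q(F;\mathbb{N}^m)$ is a $q$-Banach space, so the space of $m$-linear maps into it is $q$-Banach. Letting $B$ be the limit, the inequality $\|T\|\le\|T\|_{\tau}$ (and its cotype analogue, obtained as in Remark~\ref{Norm}) shows $(T_k)$ is Cauchy in $\mathcal{L}(E_1,\dots,E_m;F)$ with some limit $T$; a coordinatewise argument as in Theorem~\ref{COMPL}, invoking Lemma~\ref{UESI} to pass from convergence in $Rad$ or $\ell_q$ to convergence of individual entries, identifies $B=\hat{T}$, so $T$ is in the class and $T_k\to T$.

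The multi-ideal inequality I would obtain exactly as in the final theorem of Section~2. For $t\in\mathcal{L}(F;H)$ and $u_i\in\mathcal{L}(G_i;E_i)$, I write $\big(t\circ T\circ(u_1,\dots,u_m)\big)^{\wedge}$ and push the outer $t$ through the range sequence class using its linear stability, and each inner $u_i$ through the domain sequence class using its linear stability. The range class $Rad(\cdot;\mathbb{N}^m)$ is linearly stable (established above), $\ell_q(\cdot;\mathbb{N}^m)$ is linearly stable, and the domain classes $\ell_{p_i}(\cdot)$ and $Rad(\cdot)$ are linearly stable; this yields $\|t\circ T\circ(u_1,\dots,u_m)\|\le\|t\|\,\|T\|\,\|u_1\|\cdots\|u_m\|$, with the $q$-power bookkeeping in the cotype case.

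The remaining axioms, containment of finite type and the value $\|I_m\|=1$, reduce via linear stability to a scalar multiple inclusion of the type appearing in Remark~\ref{INC1}. For $\tau_{p_1,\dots,p_m}^{multi}$ this is $\ell_{p_1}(\mathbb{K})\cdots\ell_{p_m}(\mathbb{K})\hookrightarrow\ell_2(\mathbb{K};\mathbb{N}^m)=Rad(\mathbb{K};\mathbb{N}^m)$, which holds because each $p_i\le 2$ forces $\|\cdot\|_2\le\|\cdot\|_{p_i}$, with the single-coordinate choice realizing $\|I_m\|=1$, so finite type and the normalization follow as in Proposition~\ref{TFID1}. For $\mathcal{C}_q^{m,multi}$ the required inclusion is $Rad(\mathbb{K})\cdots Rad(\mathbb{K})=\ell_2\cdots\ell_2\hookrightarrow\ell_q(\mathbb{K};\mathbb{N}^m)$, and this is the delicate point on which the whole cotype statement hinges: one must control $\big\|(\lambda^{(1)}_{j_1}\cdots\lambda^{(m)}_{j_m})\big\|_{\ell_q(\mathbb{K};\mathbb{N}^m)}$ by $\prod_{i=1}^m\|\lambda^{(i)}\|_{Rad(\mathbb{K})}$, and it is here that the standing hypothesis $q\ge 2/m$ and the comparison of $\ell_2$- and $\ell_q$-norms on the $m$-fold product enter. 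I would isolate and verify this scalar inequality first, since it is the only step not formally identical to Section~2, and I expect it to be the main obstacle; once it is in hand, the finite-type containment and the evaluation of the normalizing constant follow by the same linear-stability bookkeeping as in the type case.
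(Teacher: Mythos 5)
Your overall strategy---push everything through the induced-operator characterizations of Theorems \ref{MTSeq} and \ref{MCTSeq}, prove completeness as in Theorem \ref{COMPL}, get the multi-ideal inequality from linear stability as in Section 2, and reduce finite type and the normalization to a scalar product inclusion as in Remark \ref{INC1} and Proposition \ref{TFID1}---is sound, and it is in substance the ``unpacked'' version of what the paper does. The paper's actual proof is much shorter: it observes that $\ell_{p_i}(\cdot)$, $Rad(\cdot)$, $Rad\left(\cdot;\mathbb{N}^m\right)$ and $\ell_q\left(\cdot;\mathbb{N}^m\right)$ are linearly stable, asserts the two scalar inclusions $\ell_{p_1}(\mathbb{K})\cdots\ell_{p_m}(\mathbb{K})\overset{multi, 1}{\hookrightarrow}Rad\left(\mathbb{K};\mathbb{N}^m\right)$ and $\ell_2\cdots\ell_2\overset{multi, 1}{\hookrightarrow}\ell_q\left(\mathbb{K};\mathbb{N}^m\right)$, and then invokes the general result \cite[Theorem $3.5$]{RS19}, which packages precisely the completeness, ideal-property and finite-type verifications you propose to do by hand. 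For the type half your plan would go through (granting each $p_i\le 2$, see below), and nothing is lost by doing the bookkeeping directly rather than citing \cite{RS19}.

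The genuine gap sits exactly at the step you flag as ``the main obstacle'' and then postpone: the scalar inclusion $Rad(\mathbb{K})\cdots Rad(\mathbb{K})=\ell_2\cdots\ell_2\overset{multi, 1}{\hookrightarrow}\ell_q\left(\mathbb{K};\mathbb{N}^m\right)$. You never verify it, and under the standing hypothesis $q\ge\frac{2}{m}$ it is false whenever $q<2$. For the full $m$-indexed product the $\ell_q$-norm factorizes,
\begin{equation*}
	\left\|\left(\lambda_{j_1}^{(1)}\cdots\lambda_{j_m}^{(m)}\right)_{j_1,\dots,j_m=1}^{\infty}\right\|_{\ell_q\left(\mathbb{K};\mathbb{N}^m\right)}=\prod_{i=1}^{m}\left\|\left(\lambda_j^{(i)}\right)_{j=1}^{\infty}\right\|_q,
\end{equation*}
so the inclusion forces $\ell_2\subset\ell_q$, i.e.\ $q\ge 2$: taking $\lambda^{(1)}_j=j^{-1/2}\left(\log(j+1)\right)^{-1}\in\ell_2\setminus\ell_q$ (any $q<2$) and the remaining factors equal to $e_1$ produces an $m$-sequence outside $\ell_q\left(\mathbb{K};\mathbb{N}^m\right)$; in particular $I_m\notin\mathcal{C}_q^{m, multi}(\mathbb{K}^m;\mathbb{K})$ for such $q$. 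The exponent $\frac{2}{m}$ comes from H\"older, which controls only the \emph{diagonal} product $\left(\lambda_j^{(1)}\cdots\lambda_j^{(m)}\right)_{j=1}^{\infty}\in\ell_{2/m}$---the inclusion relevant to the Botelho--Campos (diagonal) notion of cotype, not to the multi-indexed space $\ell_q\left(F;\mathbb{N}^m\right)$ used in this paper's definition of multi-cotype. So the cotype half of your plan cannot be completed for $\frac{2}{m}\le q<2$; it works only if $q\ge 2$ (or if multi-cotype is reinterpreted diagonally). You should know that the paper's own proof has the same defect---it asserts this inclusion ``because $q\ge\frac{2}{m}$'' with no computation---so your instinct about where the difficulty lies is exactly right, but flagging the step is not proving it, and as stated it fails. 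A smaller instance of the same issue touches your type half: you justify $\ell_{p_1}(\mathbb{K})\cdots\ell_{p_m}(\mathbb{K})\hookrightarrow\ell_2\left(\mathbb{K};\mathbb{N}^m\right)$ by ``each $p_i\le 2$'', but Section 4's standing hypothesis $\frac{1}{2}\le\frac{1}{p_1}+\cdots+\frac{1}{p_m}\le 1$ does not grant this (for $m\ge 2$ it is compatible with all $p_i\le 2$ only in the degenerate case $m=2$, $p_1=p_2=2$); that inclusion genuinely needs the hypothesis $1<p_i\le 2$ of Remark \ref{INC1}, which the paper also imports silently.
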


\begin{proof}
	Let $\ell_{p_i}(\cdot), Rad(\cdot)$ be linearly stable sequence classes, and let 
	$\ell_{p_i}\left(\cdot; \mathbb{N}^{\infty}\right)$ and   $Rad\left(\cdot; \mathbb{N}^m\right)$ be linearly stable $m$-sequence classes. We already know that  $\ell_{p_1} \cdots \ell_{p_m} = \ell_{p_1}^u(\mathbb{K})\cdots \ell_{p_m}^u(\mathbb{K})$ $\overset{multi, 1}{\hookrightarrow} Rad\left(\mathbb{K}; \mathbb{N}^m\right)$.  It  is also not difficult to see that	
\begin{equation*}
	Rad(\mathbb{K})\overset{m}{\cdots} Rad(\mathbb{K}) = \ell_2 \overset{m}{\cdots} \ell_2 \overset{multi, 1}{\hookrightarrow} \ell_q(\mathbb{K};\mathbb{N}^m) {\bf }
\end{equation*}
because, $q \ge \frac{2}{m}$. Since $Rad(\cdot), \ell_2(\cdot), \ell_{p_i}(\cdot)$ are linearly stable sequence classes and $Rad\left(\cdot; \mathbb{N}^m\right)$ and $\ell_2\left(\cdot; \mathbb{N}^m\right)$ are linearly stable $m$-sequence classes, it follows from \cite[Theorem $3.5$]{RS19} that $\left(\tau_{p_1,\dots, p_m}^{multi}, \|\cdot\|_{\tau_{p_1,\dots, p_m}^{multi}} \right)$ is a Banach ideal and $\left(\mathcal{C}_q^{m, multi}, \|\cdot\|_{\mathcal{C}_q^{m, multi}} \right)$ is a $q$-Banach ideal  of $m$-linear operators.
\end{proof}

Following the same idea as in  Theorem \ref{CCMAS}, we can obtain the following result.

\begin{theorem}
	The pairs
	\begin{align*}
		&\left(\left(\tau_{p}^{m, multi}, \|\cdot\|_{\tau_{p}^{m, multi}}\right), \left(\mathcal{P}_{\tau_{p}^{m, multi}}, \|\cdot\|_{\mathcal{P}_{\tau_{p}^{m, multi}}}\right)\right)_{m=1}^{\infty}\\ &\text{ and } \left(\left(\mathcal{C}_q^{m, multi}, \|\cdot\|_{\mathcal{C}_q^{m, multi}}\right), \left(\mathcal{P}_{\mathcal{C}_q^{m, multi}}, \|\cdot\|_{\mathcal{P}_{\mathcal{C}_q^{m, multi}}}\right)\right)_{m=1}^{\infty}
	\end{align*}
are coherent and compatible with $\tau_p$ and $\mathcal{C}_q$, respectively.
\end{theorem}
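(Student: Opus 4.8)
The plan is to mirror the proof of Theorem \ref{CCMAS}: for each of the two sequences of pairs I would verify the coherence conditions $(CH1)$--$(CH5)$ of Definition \ref{D2.2.}, check that the constants may be taken as $\beta_1=\beta_2=\beta_3=1$, and then invoke \cite[remark $3.3$]{PR14} to obtain simultaneously coherence and compatibility with the linear ideals $\tau_p$ and $\mathcal{C}_q$. Throughout I would use the sequential characterizations of Theorems \ref{MTSeq} and \ref{MCTSeq} (the induced operator $\hat{T}$ together with $\|\hat{T}\|=\|T\|_{\tau_{p}^{m,multi}}$ and $\|\hat{T}\|=\|T\|_{\mathcal{C}_q^{m,multi}}$), the linear stability of the sequence classes $\ell_{p}(\cdot)$, $Rad(\cdot)$, $\ell_q(\cdot)$ and of their $m$-sequence versions, the identity $Rad(\mathbb{K})=\ell_2$ from Remark \ref{RCL2}, and the fact that the polynomial ideals are the generated ones $\mathcal{P}_{\tau_p^{m,multi}}$ and $\mathcal{P}_{\mathcal{C}_q^{m,multi}}$, so that $(CH5)$ holds by definition in both cases.

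For the multilinear conditions, $(CH1)$ I would treat exactly as in Theorem \ref{CCMAS}: fixing $a_j$ in the $j$-th slot, I take the one-term sequence $(a_j,0,0,\dots)$, whose norm equals $\|a_j\|$ in every sequence class; since the Rademacher functions are independent, the $m$-fold integral defining the $Rad(F;\mathbb{N}^m)$-norm (resp. the $m$-fold $\ell_q(F;\mathbb{N}^m)$-sum) collapses to the $(m-1)$-fold expression for $T_{a_j}$, yielding $\|T_{a_j}\|\le\|T\|\,\|a_j\|$, i.e. $\beta_1=1$. For $(CH3)$ I multiply by $\gamma\in E_{m+1}'$; by Fubini the extra factor splits off and equals $\left(\sum_j|\gamma(x_j^{(m+1)})|^2\right)^{1/2}=\|(\gamma(x_j^{(m+1)}))\|_{\ell_2}$ in the type case and $\|(\gamma(x_j^{(m+1)}))\|_{\ell_q}$ in the cotype case. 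In the type case I bound this by $\|\gamma\|\,\|(x_j^{(m+1)})\|_{\ell_p}$ using $\ell_p\overset{1}{\hookrightarrow}\ell_2$ (valid since $p\le 2$); in the cotype case I bound it by $\|\gamma\|\,\|(x_j^{(m+1)})\|_{Rad}$ using linear stability together with $\ell_2=Rad(\mathbb{K})\overset{1}{\hookrightarrow}\ell_q$. In both cases $\beta_3=1$.

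The polynomial conditions reduce to the multilinear ones. Condition $(CH2)$ follows from $(CH1)$ and Lemma \ref{PMD}: since $(P_a)^{\vee}=\check{P}_a$ and the polynomial norm is the generating multilinear norm of $\check{P}$, I obtain $\|P_a\|\le\|P\|\,\|a\|$, so $\beta_2=1$. Condition $(CH4)$ follows from $(CH3)$ as in Theorem \ref{CCMAS}: I expand $(\gamma P)^{\vee}$ by the polarization formula into a finite symmetric sum of terms of the form $\gamma(x^{(k)})\,\check{P}(\text{remaining variables})$, each of which lies in the $(m+1)$-linear class by $(CH3)$, and the class is a vector space. Finally $(CH5)$ is immediate from the definitions of $\mathcal{P}_{\tau_p^{m,multi}}$ and $\mathcal{P}_{\mathcal{C}_q^{m,multi}}$.

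The step I expect to require the most care is the cotype side of $(CH3)$: the splitting estimate needs $\ell_2=Rad(\mathbb{K})\overset{1}{\hookrightarrow}\ell_q$, that is $q\ge 2$. This is consistent with the present framework, since the sequence is indexed from $m=1$, where the base ideal $\mathcal{C}_q^{1,multi}=\mathcal{C}_q$ is the linear cotype-$q$ ideal and the admissibility $q\ge 2/m$ at $m=1$ forces $q\ge 2$; for larger $m$ the inequality $2/m\le q$ is then automatic, so the only genuine bookkeeping is to confirm that the admissibility ranges for $p$ (resp. $q$) survive the passage from $m$ to $m+1$ factors. Apart from this, each individual estimate is a routine Fubini computation once the one-term/split structure is set up, and the uniform constants $\beta_1=\beta_2=\beta_3=1$ allow \cite[remark $3.3$]{PR14} to deliver both coherence and compatibility at once.
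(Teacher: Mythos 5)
Your proposal takes exactly the route the paper takes: the paper offers no proof of this theorem at all beyond the single sentence ``Following the same idea as in Theorem \ref{CCMAS}, we can obtain the following result,'' and your verification of $(CH1)$--$(CH5)$ with $\beta_1=\beta_2=\beta_3=1$ followed by the appeal to \cite[Remark $3.3$]{PR14}, with Theorems \ref{MTSeq} and \ref{MCTSeq} playing the role that Proposition \ref{CONT} played there, is precisely what that sentence intends.

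However, the issue you set aside as ``the only genuine bookkeeping'' is in fact a real gap --- one the paper inherits as well, and hides behind its one-sentence proof. Your type-side $(CH3)$ argument needs $\ell_p\overset{1}{\hookrightarrow}\ell_2$, i.e.\ $p\le 2$ (a condition also forced by compatibility with the linear type ideal $\tau_p$). But the standing assumption of Section \ref{PGS} is $\frac{1}{2}\le\frac{1}{p_1}+\cdots+\frac{1}{p_m}\le 1$, which for $p_1=\cdots=p_m=p$ reads $m\le p\le 2m$; this is compatible with $p\le 2$ only when $m\le 2$, and indeed no single value of $p$ satisfies the standing assumption simultaneously at every level of the sequence $(\,\cdot\,)_{m=1}^{\infty}$. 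Consequently the admissibility ranges do \emph{not} survive the passage from $m$ to $m+1$ on the type side: for $m\ge 3$ the classes $\tau_p^{m,multi}$ with $p\le 2$ fall outside the hypotheses under which Theorem \ref{MTSeq} --- on which your whole verification rests --- was stated and proved. So the type half of your argument (and of the paper's claim) is rigorous only if one either restricts to the degenerate overlap of parameter ranges or drops the standing assumption and re-establishes Theorem \ref{MTSeq} for $1<p\le 2$ at every $m$; neither step is carried out by you or by the authors. By contrast, your cotype-side analysis is correct and complete: $q\ge 2$ (forced at $m=1$) implies $q\ge \frac{2}{m}$ for all $m$, the splitting estimate in $(CH3)$ uses only $\ell_2=Rad(\mathbb{K})\overset{1}{\hookrightarrow}\ell_q$, and that half of the theorem genuinely goes through as you describe.
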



\begin{thebibliography}{C}

\bibitem{BJ} Braunss H.-A., Junek, H.:{\bf Ideals of polynomials and multilinear mappings}.
Unpublished notes.

\bibitem{BG04} Bombal F., Garcia D., Villanueva I.: {\bf Multilinear extensions of Grothendieck’s
theorem}. The Quarterly Journal of Mathematics. {\bf 55}(4), 441–450, (2004).

\bibitem{B00} Botelho G.: {\bf Almost summing polynomials}. Math. Nachr. {\bf 211}, 25–36, (2000).

\bibitem{B05} Botelho G.: {\bf Ideals of polynomials generated by weakly compact operators}.
Note di Matematica {\bf 25}, 69–102, (2005).

\bibitem{BC16} Botelho G., Campos, J.: {\bf Type and cotype of multilinear operators}. Revista Matemática Complutense, {\bf 29}, 659-676, (2016).

\bibitem{BC17} Botelho G., Campos J.: {\bf On the transformation of vector-valued sequences by linear and multilinear operators}. Monatshefte fur Mathematik. {\bf 183}, 415--435, (2017).

\bibitem{BBJ01}  Botelho G., Braunss H.-A.,  Junek H.: {\bf Almost p-summing polynomials and multilinear
mappings}, Arch. Math., {\bf 76}, 109–118, (2001).

\bibitem{DJT95} Diestel J., Jarchow H., Tonge A.: {\bf Absolutely Summing Operators}. Cambridge University Press, Cambridge, (1995).

\bibitem{FG03} Floret K., Garcia D. : {\bf On ideals of polynomials and multilinear mappings
between Banach spaces}. Archiv der Mathematik. {\bf 81}, 300-308, (2003).

\bibitem{M03} Matos M.C.: {\bf Fully absolutely summing and Hilbert-Schmidt multilinear mapping}.
Collectanea Mathematica. {\bf 54}, 111–136, (2003).

\bibitem{P06} Pellegrino  D.,  Souza M. L. V.: {\bf Fully and strongly almost summing
multilinear mappings}, Rocky Mt. J. Math. {\bf 36}, 683–698, (2006).

\bibitem{P03} Pellegrino D.: {\bf Strongly almost summing holomorphic mappings}. J. Math. Anal Appl., {\bf 287}, 246–254, (2003).

\bibitem{P04} Pellegrino D.: {\bf Almost summing mappings}. Arch. Math., {\bf 82}, 68–80, (2004).

\bibitem{PR12} Pellegrino, D., Ribeiro, J.: {\bf On almost summing polynomials and multilinear mappings}. Linear and Multilinear Algebra. {\bf 60}, 397--413, (2012).

\bibitem{PR14} Pellegrino D., Ribeiro J.:{\bf On multi-ideals and polynomial ideals of Banach spaces: a new approach to coherence and compatibility}. Monatshefte fur Mathematik. {\bf 173}, 379-415, (2014).

\bibitem{P67} Pietsch A.: {\bf Absolutely p-summierende Abbildungen in normieten Raumen}.
Stud. Math. {\bf 27}, 333–353, (1967).

\bibitem{P83} Pietsch A.: {\bf Ideals of multilinear functionals}. In: Proceedings of the Second
International Conference on Operator Algebras, Ideals and their Applications
in Theoretical Physics. Teubner-Texte, Leipzig, pp. 185-199, (1983).

\bibitem{P12} Popa,D.: {\bf Multiple Rademacher means and their applications}. J. Math. Anal. Appl. {\bf 386} (2), 699–708, (2012).

\bibitem{P14} Popa, D.: {\bf Almost summing and multiple almost summing multilinear
operators on $\ell_p$ spaces}. Archiv der Mathematik, {\bf 103}, 291-300, (2014).

\bibitem{RS19} Ribeiro J., Santos F.: {\bf Generalized Multiple Summing Multilinear
	Operators on Banach Spaces}.  Mediterranean Journal of Mathematics, {\bf 16}, p. 108, (2019).











%
%
%
%
%
%
%
%
%
%
%
%
%
%
%
%
%
%
%
%
%
%
%
%
%
%
%
%
%
%
%
%
%
%
%
%
%
%
%
%
%
%
%
%
%
%
%
%
%
%
%
%
%
%
%
%


\end{thebibliography}
\end{document}